\tikzstyle{vertex}=[circle, draw, inner sep=0pt, minimum size=6pt]
\def\altdb{\vadjust{\vbox to 0pt{\vss\hbox{\kern \hsize
\quad{\dbend}}\kern\baselineskip\kern-10pt}}}
\newcommand{\arxiv}[1]{\href{http://arxiv.org/abs/#1}{\tt arXiv:\nolinkurl{#1}}}
\newcommand{\googlebooks}[1]{(preview at \href{http://books.google.com/books?id=#1}{google books})}
\let\@@pmod\pmod
\DeclareRobustCommand{\pmod}{\@ifstar\@pmods\@@pmod}
\def\@pmods#1{\mkern4mu({\operator@font mod}\mkern 6mu#1)}
\newcommand\id{\operatorname{id}}
\newcommand\Id{\operatorname{Id}}
\newcommand\Hom{\operatorname{Hom}}
\newcommand\Z[1]{ \mathbb{Z}_{#1}}
\newcommand\cC{ \mathcal{C}}
\newcommand\cF{ \mathcal{F}}
\theoremstyle{plain}
\newtheorem{theorem}{Theorem}[section]
\newtheorem*{theorem*}{Theorem}
\newtheorem*{prop*}{Proposition}
\newtheorem{lemma}[theorem]{Lemma}
\newtheorem{prop}[theorem]{Proposition}
\newtheorem{rmk}[theorem]{Remark}
\theoremstyle{remark}
\theoremstyle{definition}
\numberwithin{equation}{section}
\DeclareRobustCommand*{\nicefrac}{\@UnitsNiceFrac}%
\title{Simple current auto-equivalences of modular tensor categories}
\author{Cain Edie-Michell}
\address{Cain Edie-Michell\\
Department of Mathematics, Vanderbilt University\\
Nashville\
USA}
\email{cain.edie-michell@vanderbilt.edu}
\begin{document}

\maketitle

\begin{abstract}
In this short note we investigate the process of constructing auto-equivalences of modular tensor categories using invertible objects. We derive conditions on the invertible object for the resulting auto-equivalence to be either monoidal, braided, or pivotal. We also discuss the composition of these auto-equivalences constructed from invertible objects. To demonstrate the practicality of this construction, we construct auto-equivalences of several real-world examples of modular tensor categories.  
\end{abstract}

\section{Introduction}
Given any algebraic object, one should always attempt to study its group of symmetries. This guiding principle is particularly relevant to modular tensor categories, given the connections they provide between operator algebras, representation theory, and certain quantum field theories. A symmetry of a modular tensor category is encoded by a monoidal auto-equivalence of that category, either braided or just plain monoidal. These auto-equivalences are important ingredients in various constructions regarding modular tensor categories. In particular, braided auto-equivalences play a key role in the classification of  ``quantum subgroups'' of modular tensor categories, and are the starting point in applying the process of gauging \cite{MR3555361} which can produce new examples of modular tensor categories.

There is a process from conformal field theory known as \textit{simple current automorphisms} \cite{MR892062,MR1887583,MR1065268}, which takes an invertible object in a modular tensor category, and produces an automorphism of the fusion ring of that category. However (outside of the special case when the invertible object has order 2 \cite[Theorem 9.3]{1711.00645}) there is a gap in the literature regarding the categorical nature of these fusion ring automorphisms. In particular, it is not true in general that every fusion ring automorphism lifts to a monoidal auto-equivalence, and thus there is no guarantee that simple current automorphisms lift to monoidal auto-equivalences. In this short note we investigate the categorical nature of these simple current automorphisms.

The authors main motivation to study simple current auto-equivalences arose via the study of the auto-equivalences of the modular tensor category of level $k$ integrable representations of $\mathfrak{sl}_n$. A large class of auto-equivalences of these categories can be constructed through simple current auto-equivalences, hence it was necessary for the author to prove the results of this note. As these simple current auto-equivalences provide a very general method for constructing monoidal auto-equivalences of modular tensor categories, we choose to present this information in its own note. We anticipate it will have applications to other researchers working with modular tensor categories. 

In this note we construct monoidal structure maps for simple current automorphisms, and show that these structure maps satisfy the hexagon identity, thus showing that simple current automorphisms always lift to monoidal auto-equivalences. Further, we give necessary and sufficient conditions for these simple current auto-equivalences to be either braided, or pivotal. We summarise the main results of this note in the following Theorem.
\begin{theorem}\label{thm:main}
Let $\cC$ be a modular tensor category and let $g \in \cC$ be an invertible object of order $M$. Let $q$ be the unique eigenvalue of the braid $\sigma_{g,g}$, and fix $\zeta_M$ a primitive $M$-th root of unity such that $\zeta_M^\frac{M}{|q^2|} = q^2$. 

Suppose $\frac{M}{|q^2|} + 1$ is coprime to $M$, then there exists a monoidal auto-equivalence $\cF_{(g,\zeta_M)}$ of $\cC$ defined by
\[    \cF_{(g,\zeta_M)}(X) := g^n \otimes X,\]
where $n$ is such that $\zeta_M^n$ is equal to the unique eigenvalue of the braid $\sigma_{g,X}\circ \sigma_{X,g}$.

The auto-equivalence $\cF_{(g,\zeta_{M})}$ is braided if and only if either
\[
(M= 1,q=1,\zeta_1 = 1), \text{ or } (M= 2,q=-1,\zeta_2 = -1), \text{ or } (M= 3,q=e^\frac{\pm 2 \pi i }{3},\zeta_3 = e^\frac{\mp 2 \pi i }{3}), \text{ or }  (M= 4,q=\pm i,\zeta_4 = \mp i).
\]
%\begin{align*}
% &(M= 1,q=1,\zeta_1 = 1), \text{ or }  &(M= 2,q=-1,\zeta_2 = -1), \text{ or } \\
% &(M= 3,q=e^\frac{\pm 2 \pi i }{3},\zeta_3 = e^\frac{\mp 2 \pi i }{3}), \text{ or }  &(M= 4,q=\pm i,\zeta_4 = \mp i)
%\end{align*}

The auto-equivalence $\cF_{(g,\zeta_{M})}$ is pivotal if and only if the categorical dimension of $g$ is 1.
\end{theorem}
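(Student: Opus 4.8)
\medskip

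The plan is to treat the three assertions separately, after recording the arithmetic attached to $g$. As $g$ is invertible, $g\otimes g$ is simple and $\sigma_{g,g}$ is a scalar $q$; likewise, for simple $X$, the monodromy $\sigma_{g,X}\circ\sigma_{X,g}$ is a scalar $c(X)$. A standard use of the hexagon axioms shows $c$ is multiplicative in $X$ and in the $g$-slot as well, so $c(X\otimes Y)=c(X)c(Y)$, $c(g)=q^{2}$, and $c(X)^{M}=1$ since $g^{M}\cong\mathbf 1$. Hence there is a unique $n(X)\in\ZZ/M$ with $\zeta_{M}^{\,n(X)}=c(X)$, the assignment $X\mapsto n(X)$ is additive, and $n(g)=\frac{M}{|q^{2}|}$ because $\zeta_{M}^{\,n(g)}=c(g)=q^{2}=\zeta_{M}^{\,M/|q^{2}|}$.

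For the first assertion I would put $\cF(X):=g^{n(X)}\otimes X$ on simple objects and extend additively; this is well defined on morphisms since non-isomorphic simples carry no nonzero morphism and isomorphic ones share the same charge. On the $\langle g\rangle$-orbit of a simple $Z$ one computes $\cF(g^{k}\otimes Z)\cong g^{\,k(n(g)+1)+n(Z)}\otimes Z$, so $\cF$ restricts there to the affine map $k\mapsto k\bigl(n(g)+1\bigr)+n(Z)$; the coprimality hypothesis says precisely that $n(g)+1=\frac{M}{|q^{2}|}+1$ is a unit modulo $M$, hence modulo the (divisor-of-$M$) orbit size, so this affine map is bijective, $\cF$ is essentially surjective, and the same unit property yields full faithfulness. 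The tensor structure $J_{X,Y}\colon\cF(X)\otimes\cF(Y)\to\cF(X\otimes Y)$ I would take to be the canonical morphism, built from associators and braidings, that slides $g^{n(Y)}$ past $X$ and then merges $g^{n(X)}\otimes g^{n(Y)}\cong g^{n(X)+n(Y)}=g^{n(X\otimes Y)}$. The hard part will be the monoidal-coherence hexagon for $(\cF,J)$: in the two composites $\cF(X)\otimes\cF(Y)\otimes\cF(Z)\to\cF(X\otimes Y\otimes Z)$ the ``long'' braidings of the $g$-powers past $X,Y,Z$ agree by the hexagons of $\cC$, and the residual discrepancy is a scalar assembled from the self-monodromies $c(g^{a},g^{b})=q^{2ab}$ and the associator of the pointed subcategory $\langle g\rangle$, which one checks vanishes using $\zeta_{M}^{M/|q^{2}|}=q^{2}$.

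For the braided assertion, $\cF$ is braided iff $J_{Y,X}\circ\sigma_{\cF X,\cF Y}=\cF(\sigma_{X,Y})\circ J_{X,Y}$ for all $X,Y$. Both sides are morphisms between one and the same pair of simple objects, built from braidings and associators of $\cC$, so their ratio is a scalar $\mu(X,Y)$, and I expect that comparing the two braid diagrams and simplifying with $\sigma_{g,g}=q$, the multiplicativity of the monodromy, and $\zeta_{M}^{M/|q^{2}|}=q^{2}$, gives $\mu(X,Y)=(q\zeta_{M})^{\,n(X)n(Y)}$. So $\cF$ is braided iff $(q\zeta_{M})^{\,n(X)n(Y)}=1$ for all simple $X,Y$. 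Nondegeneracy of $\cC$ forces the charges $\{n(X)\}$ to generate $\ZZ/M$ (otherwise some $g^{j}\not\cong\mathbf 1$ would lie in the M\"uger centre), so the condition collapses to $q\zeta_{M}=1$. Substituting $\zeta_{M}=q^{-1}$: since $\zeta_{M}$ is a primitive $M$-th root of unity, $q$ has order $M$, whence $|q^{2}|=M/\gcd(2,M)$ and $\frac{M}{|q^{2}|}=\gcd(2,M)$; then $\zeta_{M}^{M/|q^{2}|}=q^{2}$ reads $q^{\gcd(2,M)+2}=1$, i.e.\ $M\mid\gcd(2,M)+2$, forcing $M\in\{1,2,3,4\}$. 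For each such $M$, ``$q$ a primitive $M$-th root of unity and $\zeta_{M}=q^{-1}$'' produces exactly the four listed triples; conversely each of them satisfies $q\zeta_{M}=1$, so gives a braided $\cF$.

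For the pivotal assertion, a monoidal auto-equivalence is pivotal iff it intertwines the pivotal isomorphisms $a_{X}\colon X\to X^{**}$. Since $X^{**}\cong X$, we have $n(X^{**})=n(X)$, so $\cF$ carries $a_{X}$ to $\id_{g^{n(X)}}\otimes a_{X}$, whereas the pivotal isomorphism of $g^{n(X)}\otimes X$ is, under the canonical identification $(g^{n(X)}\otimes X)^{**}\cong(g^{n(X)})^{**}\otimes X^{**}$, the morphism $a_{g^{n(X)}}\otimes a_{X}$. For invertible $h$ the canonical isomorphism $h^{**}\cong h$ turns $a_{h}$ into a scalar $p(h)\in\CC^{\times}$, and $p$ is a character of $\langle g\rangle$, so $p(g^{n(X)})=p(g)^{n(X)}$; hence $\cF$ is pivotal iff $p(g)^{n(X)}=1$ for all simple $X$, and — using again that the charges generate $\ZZ/M$ — iff $p(g)=1$. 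Finally, for an invertible object the scalar $p(g)$ equals the categorical dimension $\dim(g)$ (write $\dim(g)$ as the composite of $a_{g}$ with the relevant coevaluation and evaluation, all isomorphisms here), so $\cF_{(g,\zeta_{M})}$ is pivotal iff $\dim(g)=1$. The main obstacle throughout is the coherence computation of the first paragraph; the braided and pivotal parts are then bookkeeping of scalars together with the nondegeneracy argument.
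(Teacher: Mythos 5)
Your overall route coincides with the paper's: grade $\cC$ by the monodromy with $g$, define $\cF$ by tensoring with the matching power of $g$, use coprimality of $A+1:=\tfrac{M}{|q^2|}+1$ to get an auto-equivalence, build the tensor structure from braidings, and reduce the braided and pivotal conditions to scalar identities (your $(q\zeta_M)^{n(X)n(Y)}=1$ matches the paper's $\zeta_M^{mn}q^{mn}=1$, and your derivation of $M\in\{1,2,3,4\}$ from $M\mid \gcd(2,M)+2$ is if anything cleaner than the paper's case split). The genuine gap sits at the crux of the monoidal-lifting step. The residual discrepancy in the hexagon is exactly the associator of the pointed subcategory $\langle g\rangle$, which in the standard gauge takes the values $\alpha_{m,n,p}=1$ for $n+p<M$ and $\alpha_{m,n,p}=q^{Mm}$ for $n+p\geq M$; so the obstruction vanishes if and only if $q^{M}=1$. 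You claim the discrepancy ``vanishes using $\zeta_M^{M/|q^2|}=q^2$'', but that relation is merely the normalisation of $\zeta_M$ and says nothing about $q^{M}$. The actual input is the coprimality hypothesis itself: if $M$ is odd then $q$ is automatically an $M$-th root of unity, and if $M$ is even then coprimality of $A+1$ with $M$ forces $A\geq 2$, hence $|q^2|$ divides $M/2$ and $q^{M}=(q^2)^{M/2}=1$. The paper isolates this as a separate lemma because it is precisely the cohomological obstruction that prevents general fusion-ring automorphisms from lifting to monoidal functors; it cannot be waved through, and your stated reason for its vanishing is not the correct one.

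A smaller instance of the same omission occurs in your pivotal argument: the identity $p(g)=\dim(g)$ is not gauge-free. With the paper's choice of trivalent vertices one computes $\dim(g)=\psi_g\cdot q^{M}$, so one again needs $q^{M}=1$ to conclude that pivotality of $\cF$ (equivalently $\psi_g=\id_g$) is the same as $\dim(g)=1$. Once $q^{M}=1$ is extracted from the coprimality hypothesis, both your coherence check and your pivotal reduction go through as planned, and the rest of your argument is sound.
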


In order to help understand the composition of these simple current auto-equivalences we also prove Proposition~\ref{prop:order}, which gives an upper bound on the order of a simple current auto-equivalence, and Proposition~\ref{prop:com}, which gives a sufficient condition for two simple current auto-equivalences to commute.

We end this note by working through several examples coming from quantum groups at integer levels. We use these examples to demonstrate the practicality of Theorem~\ref{thm:main} in constructing monoidal auto-equivalences of real-world modular tensor categories.

\section{Preliminaries}
A \textit{braided fusion category} is a fusion category, along with a collection of natural isomorphisms 
\[  \sigma_{X,Y} : X\otimes Y \to Y \otimes X\]
satisfying a certain coherence equation. We direct the reader to \cite{MR2609644} for additional details. 

We say a fusion category $\cC$ is \textit{pivotal} if there exist a collection of natural isomorphisms 
\[\psi_X: X\to X^{**}\]
that give a monoidal natural isomorphism $\Id_\cC \to (**)$.

We define the \textit{symmetric centre} of a braided fusion category $\mathcal{B}$ by
\[  \mathcal{Z}_2(\mathcal{B}) = \{ X\in \mathcal{B} : \sigma_{X,Y}\circ \sigma_{Y,X} = \id_{X\otimes Y} \text{ for all } Y\in \mathcal{B}\}.\]
A \textit{modular tensor category} is a pivotal braided fusion category whose symmetric centre is trivial.

In this note we investigate auto-equivalences of modular tensor categories. A monoidal auto-equivalence of a monoidal category $\cC$ is an abelian auto-equivalence $\cF :\cC \to \cC$, along with monoidal structure structure isomorphisms
\[   \tau_{X,Y}:  \cF(X)\otimes \cF(Y) \to \  \cF(X\otimes Y)\]
satisfying the hexagon equation
\[  
\begin{tikzcd}[row sep=3em ,column sep = .2cm]
(\cF(X)\otimes \cF(Y))\otimes \cF(Z) \arrow[rr,"\alpha_{\cF(X), \cF(Y),\cF(Z)}"] \arrow[d,"\tau_{X,Y}\otimes \id_Z"]  & \hspace{2cm}& \cF(X)\otimes (\cF(Y)\otimes \cF(Z)) \arrow[d,"\id_X \otimes \tau_{Y,Z}"]\\
\cF(X\otimes Y)\otimes \cF(Z) \arrow[d,"\tau_{X\otimes Y, Z}"] && \cF(X) \otimes (\cF(Y\otimes Z) \arrow[d,"\tau_{X,Y\otimes Z}"] \\
\cF( (X\otimes Y)\otimes Z) \arrow[rr,"\cF(\alpha_{X,Y,Z} )"] && \cF(X\otimes (Y\otimes Z)) 
\end{tikzcd} 
\]
We are also interested in braided auto-equivalences of braided tensor categories. A braided auto-equivalence of $\mathcal{B}$ is a monoidal auto-equivalence $(\cF, \tau)$ satisfying  
\[  
\begin{tikzcd}[row sep=3em ,column sep = .2cm]
\cF(X) \otimes \cF(Y) \arrow[rr,"\sigma_{\cF(X),\cF(Y)}"] \arrow[d,"\tau_{X,Y}"]  & \hspace{2cm}& \cF(Y) \otimes \cF(X) \arrow[d," \tau_{Y,X}"]\\
\cF(X\otimes Y) \arrow[rr,"\cF(\sigma_{X,Y})"] && \cF(Y\otimes X)
\end{tikzcd} 
\]
If $\cC$ is pivotal with pivotal isomorphisms $\psi_X : X \to X^{**}$ then we say a monoidal auto-equivalence $(\cF, \tau)$ is pivotal if 
\[  \delta_{X^*} \circ \cF(\psi_X) = \delta^*_X \circ \psi_{\cF(X)},\]
where $\delta_X := ((\cF(\operatorname{ev}_X)\circ \tau_{X^*,X})\otimes \id_{\cF(X)^*})\circ (\id_{\cF(X^*)}\otimes \operatorname{coev}_{\cF(X)})$.

\section{Main Results}
In this section we begin with an invertible object in a modular tensor category, and construct an abelian endo-functor. We then derive conditions on the invertible object for the endo-functor to be an auto-equivalence, a monoidal auto-equivalence, a braided auto-equivalence, or a pivotal auto-equivalence.
\vspace{1em}

Let $\cC$ be a modular tensor category, and $g \in \cC$ an invertible object of order $M$. It follows from \cite[Section 2.5]{MR1734419} that 
\[   \sigma_{g,g} = q \id_g \otimes \id_g\]
for $q$ some $2M$-th root of unity if $M$ is even, and $M$-th root of unity if $M$ is odd. In either case $q^2$ is an $M$-th root of unity, so we can define the integer
\[ A := \frac{M}{ |q^2|}. \]

Fix $\zeta_M$ a primitive $M$-th root of unity, such that 
\[    \zeta_M^{A} := q^2.\]
We remark this choice of $A$-th root of $q^2$ will have a surprisingly non-trivial effect on the resulting auto-equivalence that we are about to construct.

For each $m \in \Z{M}$ we define
\[   \cC_m := \left\{ \quad X\in \cC \quad : \quad  \raisebox{-.5\height}{ \includegraphics[scale = .5]{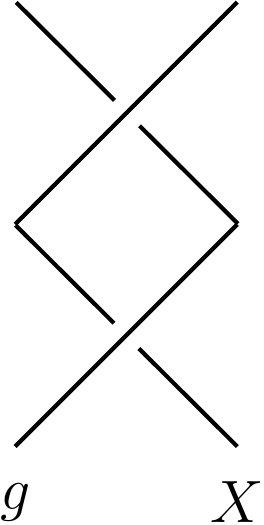}}\quad=\quad \zeta_M^m\raisebox{-.5\height}{ \includegraphics[scale = .5]{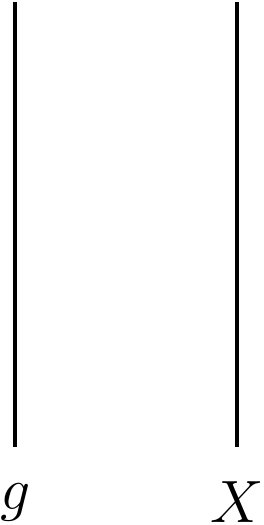}}\quad\right\}\]
 a full abelian sub-category of $\cC$. With these abelian sub-categories in hand we can write 
\[  \cC  = \bigoplus_{\Z{M}} \cC_m,\]
giving $\cC$ the structure of a $\Z{M}$-graded category. The non-degeneracy of $\cC$ ensure this grading is faithful. i.e, Let $\Z{N}$ be the supported grading group of $\cC$. Then $N$ is a divisor of $M$, and it can be checked that the object $g^N$ lives in the symmetric centre of $\cC$.

We define an abelian graded endo-functor $\cF_{(g, \zeta_M)}: \cC \to \cC$ by:
\[  \cF_{(g, \zeta_M)}(X_m) := g^m \otimes X_m \text{ for } X_m \in \cC_m,\]
and extend linearly to obtain a functor defined on all of $\cC$.

Let us investigate when this functor is an abelian auto-equivalence of $\cC$.
\begin{lemma}\label{lem:auto}
The abelian endo-functor $\cF_{(g, \zeta_M)}$ is an auto-equivalence of $\cC$ if and only if $A+1$ is coprime to $M$.
\end{lemma}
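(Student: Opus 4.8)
The plan is to reduce the statement to the elementary fact that multiplication by $A+1$ is a bijection of $\Z{M}$ exactly when $A+1$ is coprime to $M$, by tracking how $\cF_{(g,\zeta_M)}$ moves the $\Z{M}$-grading. First I would record the grading shift: using $\sigma_{g,g}=q\,\id$ and the hexagon, the double braiding of $g^k$ with $g$ is $q^{2k}\,\id=\zeta_M^{Ak}\,\id$, so $g^k\in\cC_{Ak}$. Hence for $X_m\in\cC_m$ the double braiding of $g^m\otimes X_m$ with $g$ is $\zeta_M^{Am}\zeta_M^{m}=\zeta_M^{(A+1)m}$, i.e.\ $\cF_{(g,\zeta_M)}$ maps $\cC_m$ into $\cC_{(A+1)m}$. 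The identical computation shows that $Y\mapsto g^{-m}\otimes Y$ carries $\cC_{(A+1)m}$ back into $\cC_m$; since both of these functors are ``tensor by an invertible object'' they are fully faithful and mutually inverse, so $\cF_{(g,\zeta_M)}$ restricts to an equivalence of abelian categories from $\cC_m$ onto $\cC_{(A+1)m}$ for every $m$.

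For the ``if'' direction, assume $\gcd(A+1,M)=1$, so that $m\mapsto(A+1)m$ permutes $\Z{M}$. Then the pieces $\cC_{(A+1)m}$ are precisely the graded pieces $\cC_m$ in a new order, so $\cF_{(g,\zeta_M)}$ is fully faithful on all of $\cC$ (distinct pieces are sent to distinct pieces, so no extra $\Hom$s appear) and essentially surjective (any $Y_p\in\cC_p$ equals $\cF_{(g,\zeta_M)}(g^{-m}\otimes Y_p)$ for $m=(A+1)^{-1}p$). If one prefers to be explicit, one writes the inverse as $\cG(Y_p):=g^{-(A+1)^{-1}p}\otimes Y_p$ for $Y_p\in\cC_p$ and checks $\cG\circ\cF_{(g,\zeta_M)}\cong\Id_\cC\cong\cF_{(g,\zeta_M)}\circ\cG$.

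For the ``only if'' direction I would argue the contrapositive: if $d:=\gcd(A+1,M)>1$, exhibit non-isomorphic objects with isomorphic images. Take the tensor unit $\mathbf 1\in\cC_0$ and $g^{M/d}$; since $g$ has order $M$ and $1\le M/d<M$ these are non-isomorphic invertibles. Now $g^{M/d}\in\cC_{AM/d}$, so $\cF_{(g,\zeta_M)}(g^{M/d})=g^{AM/d}\otimes g^{M/d}=g^{(A+1)M/d}=\mathbf 1$, because $d\mid A+1$ forces $(A+1)M/d\equiv 0\pmod M$, while also $\cF_{(g,\zeta_M)}(\mathbf 1)=\mathbf 1$. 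Hence the induced map $\Hom(\mathbf 1,g^{M/d})\to\Hom(\cF_{(g,\zeta_M)}\mathbf 1,\cF_{(g,\zeta_M)}g^{M/d})$ is a map $0\to\Hom(\mathbf 1,\mathbf 1)\ne 0$, so it is not an isomorphism and $\cF_{(g,\zeta_M)}$ is not faithful, in particular not an auto-equivalence.

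I do not expect a genuine obstacle here: the whole argument rests on the two-line grading-shift computation and on the standard fact that tensoring by an invertible object is an equivalence onto its image. The only point needing care — and the only place the hypothesis enters — is checking that the candidate inverse is well defined across the graded pieces, which is exactly the requirement that $A+1$ be invertible modulo $M$.
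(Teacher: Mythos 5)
Your proposal is correct and follows essentially the same route as the paper: the core step in both is the computation that the double braiding of $g^m\otimes X_m$ with $g$ picks up $\zeta_M^{m}q^{2m}=\zeta_M^{(A+1)m}$, so that $\cF_{(g,\zeta_M)}$ sends $\cC_m$ into $\cC_{(A+1)m}$ and is an equivalence precisely when multiplication by $A+1$ is an automorphism of $\Z{M}$. You simply make explicit the two directions (the inverse functor, and the counterexample $\mathbf 1$ versus $g^{M/d}$ when $\gcd(A+1,M)=d>1$) that the paper leaves as an immediate consequence.
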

\begin{proof}
Let $X_m \in \cC_m$. The functor $\cF_{(g, \zeta_M)}$ sends $X_m$ to the object $g^m\otimes X_m$. We compute
\[ \raisebox{-.5\height}{ \includegraphics[scale = .5]{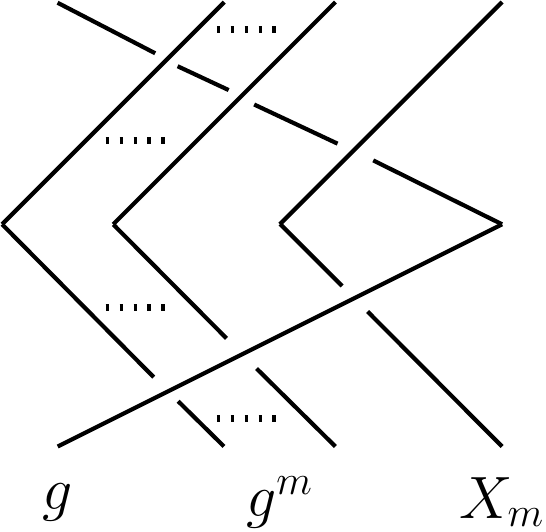}} \quad  = \quad \zeta_M^m \raisebox{-.5\height}{ \includegraphics[scale = .5]{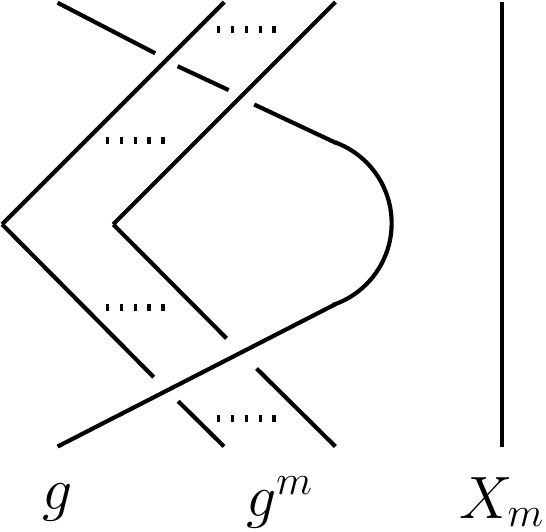}} \quad  = \quad \zeta_M^m \cdot q^{2m}\raisebox{-.5\height}{ \includegraphics[scale = .5]{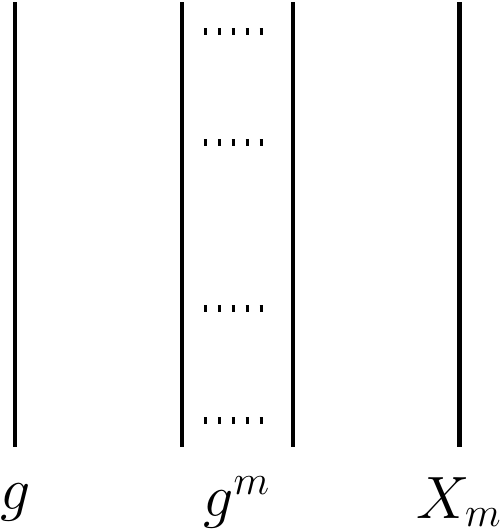}}\]
Thus, as $q^{2m} = \zeta^{Am}$, we see that $g^m\otimes X_m \in \cC_{m(A+1)}$. Hence $\cF_{(g, \zeta_M)}$ is an auto-equivalence if and only multiplication by $A+1$ induces an automorphism of the group $\Z{M}$, i.e. if and only if $A+1$ is coprime to $M$.
\end{proof}

Our goal now is to construct monoidal structure maps for the abelian auto-equivalence $\cF_{(g, \zeta_M)}$ that satisfy the hexagon equation, hence giving $\cF_{(g, \zeta_M)}$ the structure of a monoidal auto-equivalence of $\cC$. We begin by picking basis elements for the 1-dimensional Hom-spaces $\Hom( g^m \otimes g^n \to g^{m+n})$. Graphically we draw these basis elements as
\[  \raisebox{-.5\height}{ \includegraphics[scale = .5]{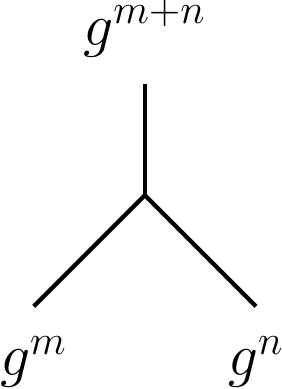}} \]
The results of \cite[Section 2.5]{MR1734419} show that we can pick these basis elements so that we have the following 6j-ology:
\[    \raisebox{-.5\height}{ \includegraphics[scale = .5]{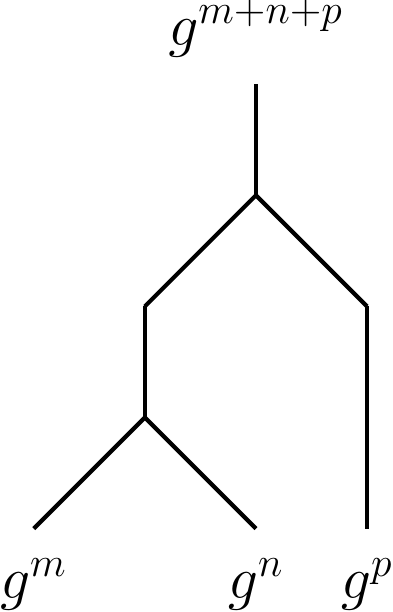}} \quad = \alpha_{m,n,p} \quad  \raisebox{-.5\height}{ \includegraphics[scale = .5]{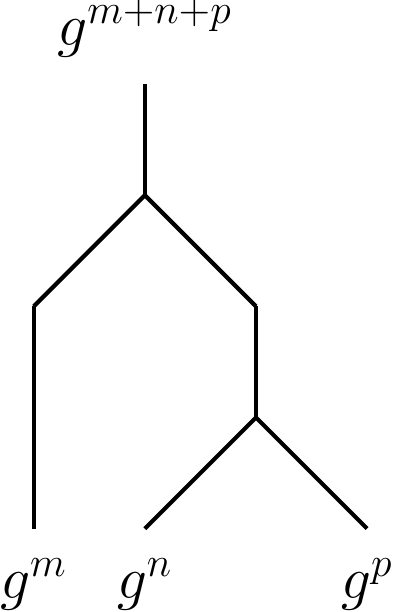}},\]
where
\[ \alpha_{m,n,p} = \begin{cases} 1 \quad &\text{ if $n+p <M$},\\ q^{Mm} \quad &\text{ if $n+p \geq M$}    \end{cases}\]
 In general we have that certain 6j symbols are non-trivial, however the following Lemma shows that we have trivial 6j symbols for the cases we care about.
 
 \begin{lemma}\label{lem:triv6}
If $A+1$ is coprime to $M$, then $q$ is an $M$-th root of unity.
\end{lemma}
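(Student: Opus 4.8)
The plan is to split into two cases according to the parity of $M$. When $M$ is odd, \cite[Section 2.5]{MR1734419} already guarantees that $q$ is an $M$-th root of unity, so there is nothing to prove and the coprimality hypothesis plays no role. All the content lies in the case $M$ even, where a priori we only know $q^{2M}=1$.

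So suppose $M$ is even. Then $q^M$ is a square root of unity, that is $q^M\in\{1,-1\}$, and the task is to rule out $q^M=-1$. Assume for contradiction that $q^M=-1$. The key identity is $q^M=(q^2)^{M/2}$, which makes sense since $M$ is even. Writing $\ell:=|q^2|=M/A$ for the order of $q^2$, the equation $(q^2)^{M/2}=-1\neq 1$ says precisely that $\ell \nmid M/2$.

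It remains to translate $\ell\nmid M/2$ into a statement about $A$. Since $M=\ell A$ we have $M/2=\ell A/2$, and a short parity check gives $\ell \mid M/2$ if and only if $A$ is even: when $A$ is even this is immediate, and when $A$ is odd the evenness of $M=\ell A$ forces $\ell$ to be even, whence $\ell \mid (\ell/2)A$ would force $2\mid A$, a contradiction. Therefore $\ell\nmid M/2$ forces $A$ to be odd, so $A+1$ is even; but $M$ is even as well, so $2$ divides $\gcd(A+1,M)$, contradicting the assumption that $A+1$ is coprime to $M$. Hence $q^M=1$, so $q$ is an $M$-th root of unity. I do not anticipate a genuine obstacle here — the only point requiring care is keeping the order of $q$ distinct from the order of $q^2$ and carrying out the parity bookkeeping correctly.
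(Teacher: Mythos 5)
Your proof is correct, and it follows the same basic strategy as the paper's: dispose of the odd case via the cited fact, and in the even case use the coprimality hypothesis to extract a parity constraint on $A$. The difference is in what constraint gets extracted. The paper's proof only records that $A>1$ (arguing that $A=1$ would make $A+1=2$ share a factor with the even $M$) and then asserts the conclusion, but $A>1$ alone does not imply $q^M=1$: for instance $M=6$, $q=i$ gives $A=3>1$ yet $q^6=-1$ (of course there $\gcd(A+1,M)=2$, so the lemma itself is not contradicted). The fact actually needed is that $A$ is \emph{even}, since $q^M=(q^2)^{M/2}$ and $M/2=(A/2)\,|q^2|$ is a multiple of $|q^2|$ exactly when $2\mid A$; and evenness of $A$ is precisely what the hypothesis delivers, because $A$ odd would make $A+1$ even and hence not coprime to the even $M$. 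Your write-up identifies this correctly, including the careful verification that $|q^2|\mid M/2$ if and only if $A$ is even, so it supplies the step that the paper's two-line proof elides (or misstates). In short: same route, but yours is the version that actually closes the argument.
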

\begin{proof}
If $M$ is odd then $q$ is automatically an $M$-th root of unity. If $M$ is even then $A>1$, as otherwise $A+1$ would be coprime to $M$. Thus $q$ is an $M$-th root of unity.
\end{proof}

We choose the following monoidal structure maps for $\cF_{(g, \zeta_M)}$
\[ \tau_{X_m, Y_n} :=      \raisebox{-.5\height}{ \includegraphics[scale = .5]{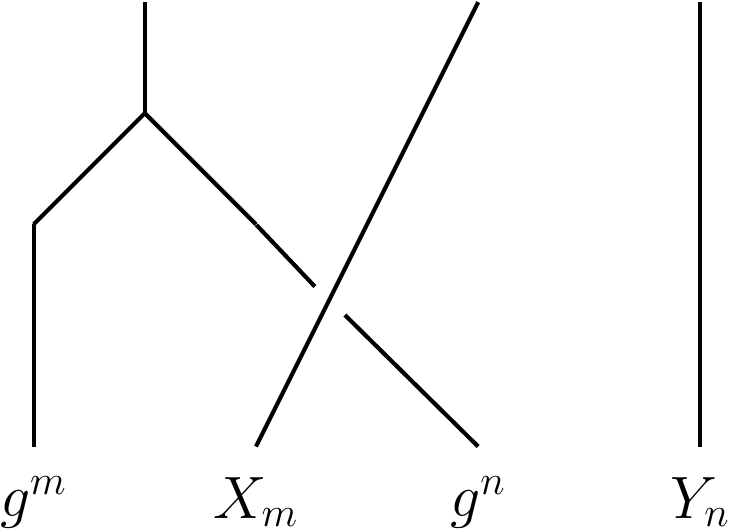}}: \cF_{(g, \zeta_M)}(X_m) \otimes \cF_{(g, \zeta_M)}(Y_n) \to \cF_{(g, \zeta_M)}(X_m \otimes Y_n).\]
We remark that one can also choose different monoidal structure maps by reversing the braiding. We state without proof that this different choice results in a naturally isomorphic monoidal auto-equivalence.

\begin{lemma}
Suppose $A+1$ is coprime to $M$, then $(\cF_{(g,\zeta_M)}, \tau)$ is a monoidal auto-equivalence.
\end{lemma}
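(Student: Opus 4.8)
The plan is to verify the hexagon equation for the structure maps $\tau_{X_m,Y_n}$ by reducing it, via naturality, to an identity purely about the invertible objects $g^m$. Since $\cF_{(g,\zeta_M)}$ is graded and every $\tau_{X_m,Y_n}$ is built out of the chosen basis vector in $\Hom(g^m\otimes g^n\to g^{m+n})$ together with a single instance of the braiding $\sigma_{g^m,Y_n}$ (this is what the picture \texttt{tensor} encodes), both legs of the hexagon — for homogeneous $X_m\in\cC_m$, $Y_n\in\cC_n$, $Z_p\in\cC_p$ — are natural transformations whose components on arbitrary objects are determined by their components on the generating invertible objects. Concretely, I would first invoke Lemma~\ref{lem:auto} and Lemma~\ref{lem:triv6}: the hypothesis that $A+1$ is coprime to $M$ guarantees $\cF_{(g,\zeta_M)}$ is an auto-equivalence and, crucially, that $q$ is an $M$-th root of unity, so that the associator coefficients $\alpha_{m,n,p}$ are $q^{Mm}=1$ and the $6j$-symbols among the $g^i$ are all trivial.

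Next I would write down the two composites around the hexagon diagram explicitly. Going down-then-across produces $\cF(\alpha_{X,Y,Z})\circ\tau_{X\otimes Y,Z}\circ(\tau_{X,Y}\otimes\id_{\cF(Z)})$, and going across-then-down produces $\tau_{X,Y\otimes Z}\circ(\id_{\cF(X)}\otimes\tau_{Y,Z})\circ\alpha_{\cF(X),\cF(Y),\cF(Z)}$. Expanding each $\tau$ into (basis vector)$\circ$(one braid move) and pushing the associators through using naturality, the difference between the two sides collapses to: (i) a reassociation of the three basis morphisms $g^m\otimes g^n\to g^{m+n}$, $g^{m+n}\otimes g^p\to g^{m+n+p}$ versus $g^n\otimes g^p\to g^{n+p}$, $g^m\otimes g^{n+p}\to g^{m+n+p}$, which is exactly the $6j$-relation and contributes the factor $\alpha_{m,n,p}=1$ by Lemma~\ref{lem:triv6}; and (ii) a braid-relation comparison, where on one side one braids $g^m$ past $Y_n\otimes Z_p$ in a single move and on the other side one braids $g^m$ past $Y_n$ and then past $Z_p$ separately, the two being equal by the hexagon axiom for $\sigma$ in $\cC$ itself (together with naturality of $\sigma$ with respect to the chosen basis vectors $g^n\otimes g^p\to g^{n+p}$). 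Assembling (i) and (ii) shows the two legs agree.

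I would also record the routine points needed to make this a genuine monoidal structure: naturality of $\tau$ in each variable (immediate from functoriality of $\otimes$ and naturality of $\sigma$), and compatibility with the unit constraints (the component $\tau_{\mathbf 1,Y_n}$ and $\tau_{X_m,\mathbf 1}$ reduce to identities because $g^0=\mathbf 1$ and the chosen basis vectors $g^0\otimes g^n\to g^n$, $g^m\otimes g^0\to g^m$ are taken to be the left/right unitors). Since $\cF_{(g,\zeta_M)}$ is already an auto-equivalence by Lemma~\ref{lem:auto}, equipping it with $\tau$ satisfying the hexagon promotes it to a monoidal auto-equivalence, which is the claim.

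The main obstacle I anticipate is bookkeeping the braiding moves correctly: $\tau_{X\otimes Y,Z}$ involves braiding $g^{m+n}$ past $Z_p$, whereas the composite on the other side involves braiding $g^m$ and $g^n$ past things at different stages, and one must check that the scalar produced by $\sigma_{g^{m+n},Z_p}$ versus $\sigma_{g^m,\,\cdot}$ followed by $\sigma_{g^n,\,\cdot}$ match up — this is precisely where the hexagon for $\sigma$ in $\cC$ and the monoidal naturality of the basis vectors $g^i\otimes g^j\to g^{i+j}$ must be combined carefully. Because $Z_p$ is only constrained to lie in $\cC_p$ (not to be invertible), one cannot short-circuit this by a scalar computation; the argument genuinely uses naturality plus the ambient hexagon axiom. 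Everything else is the diagrammatic calculus of \cite[Section 2.5]{MR1734419} applied in the pointed subcategory generated by $g$.
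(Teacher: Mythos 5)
Your proposal is correct and follows essentially the same route as the paper: the paper's proof likewise reduces the hexagon for $\tau$ to an isotopy of braids (your step (ii)) plus the $6j$-relation, which forces $\alpha_{m,n,p}=q^{Mm}=1$ via Lemma~\ref{lem:triv6}. Your additional remarks on naturality and unit compatibility are routine points the paper leaves implicit.
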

\begin{proof}
To prove this claim we have to show that the hexagon equation is satisfied. That is we need
\[     \raisebox{-.5\height}{ \includegraphics[scale = .5]{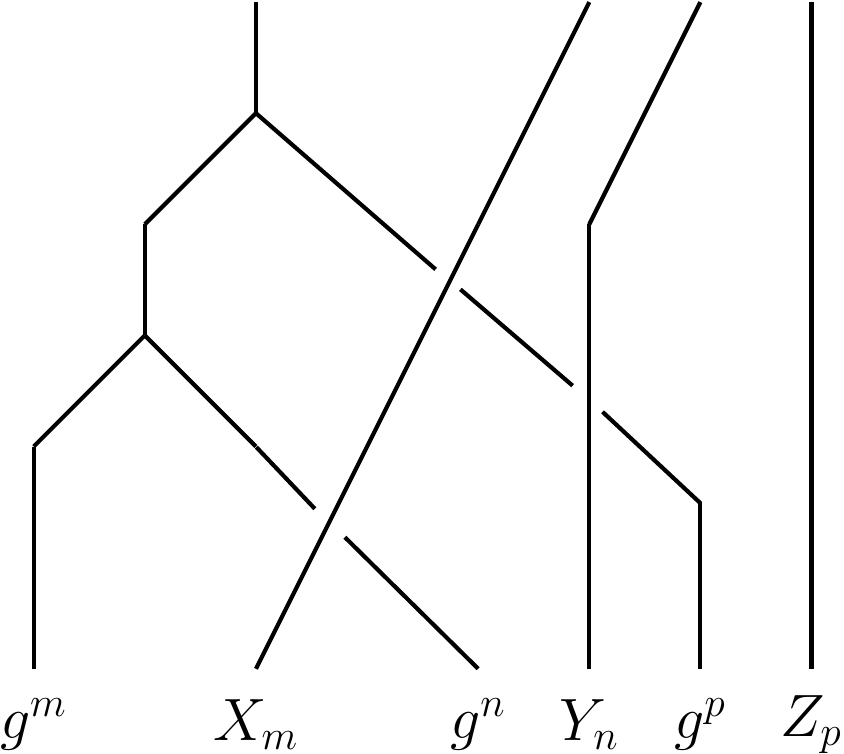}} \quad = \quad   \raisebox{-.5\height}{ \includegraphics[scale = .5]{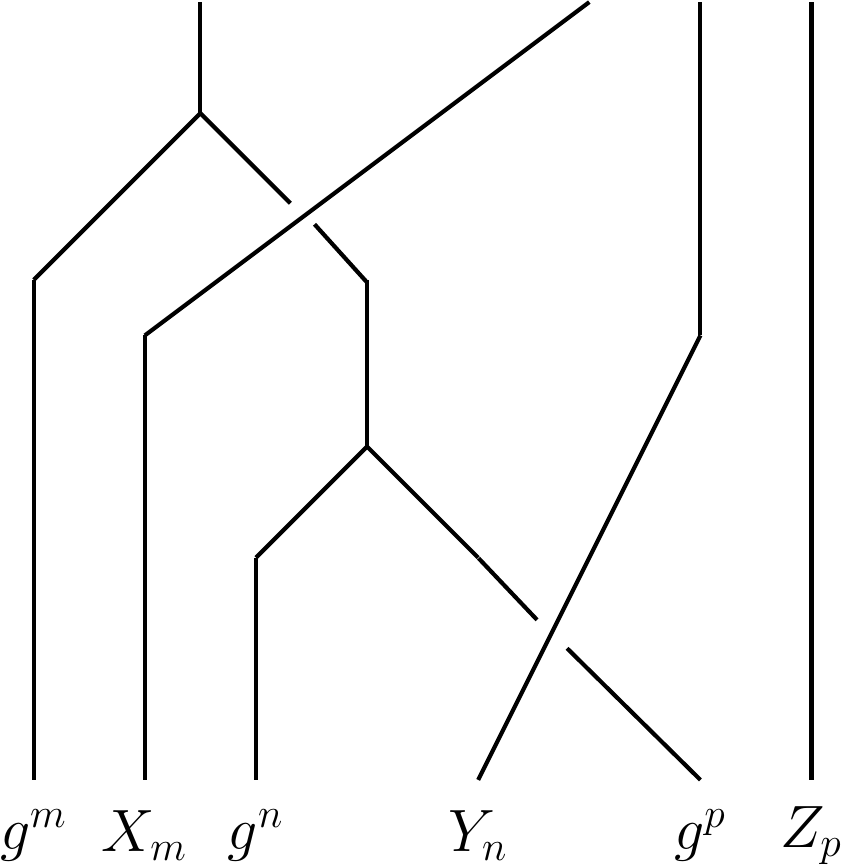}}\]
for every $X_m \in \cC_m, Y_n \in \cC_n$, and $Z_p \in \cC_p$. After an isotopy, and an application of the 6j-ology, we have that the above equations are equivalent to having $\alpha_{m,n,p} = 1$ for all $m,n,p \in \Z{M}$, which follows from Lemma~\ref{lem:triv6}.
\end{proof}

\begin{rmk}
From now on we will simply write $\cF_{(g,\zeta_M)}$ for the monoidal auto-equivalence of $\cC$, suppressing the monoidal structure maps $\tau$.
\end{rmk}

We investigate when the monoidal functor $\cF_{(g,\zeta_M)}$ is braided. For this we need to compute the R symbols for powers of the object $g$. From \cite[Section 2.5]{MR1734419}, it follows that we can arrange the trivalent vertices so that
\[     \raisebox{-.5\height}{ \includegraphics[scale = .5]{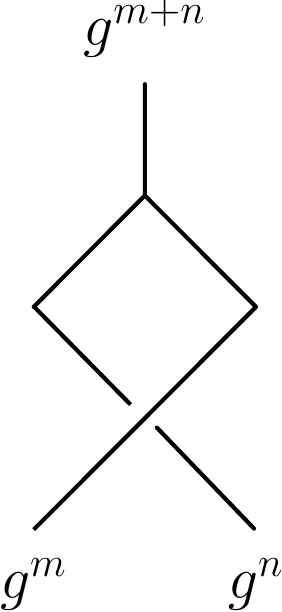}} =   q^{nm} \raisebox{-.5\height}{ \includegraphics[scale = .5]{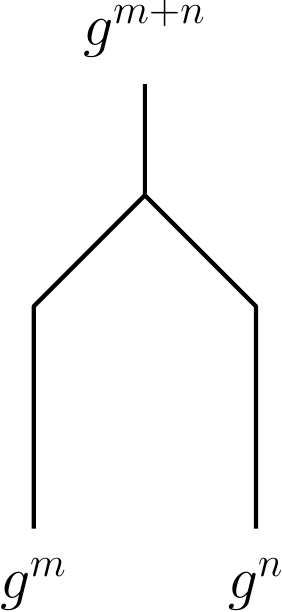}}\]
while still satisfying the 6j conditions from earlier. With these R relations, we can directly compute when $\cF_{(g,\zeta_M)}$ is braided.

\begin{lemma}
The monoidal auto-equivalence $\cF_{(g,\zeta_M)}$ is braided if and only if either
\begin{itemize}
\item $M= 1$, $q=1$ and $\zeta_1 = 1$, or

\item $M= 2$, $q=-1$ and $\zeta_2 = -1$, or

\item $M= 3$, $q= e^\frac{\pm 2 \pi i }{3}$ and $\zeta_3 = e^\frac{\mp 2 \pi i }{3}$, or

\item $M= 4$, $q=\pm i$ and $\zeta_4 = \mp i$.

\end{itemize}
\end{lemma}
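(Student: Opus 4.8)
\emph{Proof plan.} The content to verify is that, for all $X_m\in\cC_m$ and $Y_n\in\cC_n$, the braiding-compatibility square
\[
  \cF_{(g,\zeta_M)}\!\bigl(\sigma_{X_m,Y_n}\bigr)\circ \tau_{X_m,Y_n}
  \;=\;
  \tau_{Y_n,X_m}\circ \sigma_{\cF_{(g,\zeta_M)}(X_m),\,\cF_{(g,\zeta_M)}(Y_n)}
\]
commutes. I would unwind both legs into string diagrams. The left leg braids the $g^n$-strand past $X_m$, fuses the two $g$-strands through the chosen vertex $g^m\otimes g^n\to g^{m+n}$ (call it $V_{m,n}$), and then braids $X_m$ past $Y_n$. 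The right leg performs the whole braiding of the block $g^m\otimes X_m$ past the block $g^n\otimes Y_n$ — which by the hexagon axioms is the composite of the four elementary crossings $\sigma_{g^m,g^n}$, $\sigma_{g^m,Y_n}$, $\sigma_{X_m,g^n}$, $\sigma_{X_m,Y_n}$ — then braids $Y_n$ past $g^m$, and finally fuses the two $g$-strands through the vertex $V_{n,m}$. Each leg is thus a composite of associators and braidings together with a single fusion vertex, so the two legs agree up to a scalar, which it suffices to compute.

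The scalar discrepancy is extracted from three inputs. First, $\sigma_{g,g}=q\,\id$ together with the hexagons (all relevant morphisms being scalars) gives $\sigma_{g^m,g^n}=q^{mn}\,\id$, and the $R$-relation recorded above, which compares a crossing of $g$-strands to a fusion vertex, then converts $V_{n,m}\circ\sigma_{g^m,g^n}$ into $q^{\,mn}V_{m,n}$; so the two fusion vertices match up to a factor $q^{mn}$. Second, the defining relation of $\cC_n$ — that the monodromy of $g$ with an object of $\cC_n$ is $\zeta_M^n$ — together with multiplicativity of the monodromy in its first variable shows that the two crossings of $Y_n$ with $g^m$ (one from the block braiding, one from $\tau_{Y_n,X_m}$) compose to $\zeta_M^{mn}\,\id$. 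Third, Lemma~\ref{lem:triv6} makes every associativity symbol $\alpha_{\bullet,\bullet,\bullet}$ appearing in the reduction equal to $1$, since $q$ is then an $M$-th root of unity. After sliding the $X_m$-strand by naturality, the crossings $\sigma_{X_m,g^n}$ and $\sigma_{X_m,Y_n}$ on the two sides coincide, and both legs become the same structural morphism composed with $V_{m,n}$, with coefficient $1$ on the left and $q^{mn}\zeta_M^{mn}$ on the right. Hence the square commutes for the given $m,n$ if and only if $(q\zeta_M)^{mn}=1$. Since $\mathcal Z_2(\cC)$ is trivial and $g^N$ lies in it, $g^N\cong\mathbf 1$, so $M\mid N$; with $N\mid M$ this forces $N=M$, the grading is faithful, and $\cC_1\neq 0$. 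Thus commutativity for all objects is equivalent to the case $m=n=1$, i.e. to $q\zeta_M=1$. Therefore $\cF_{(g,\zeta_M)}$ is braided if and only if $\zeta_M=q^{-1}$.

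It remains to list the pairs $(M,q)$ compatible with the standing hypotheses. Since $\zeta_M$ is a primitive $M$-th root of unity and $\zeta_M=q^{-1}$, the element $q$ is a primitive $M$-th root of unity, and the normalisation $\zeta_M^{A}=q^2$ becomes $q^{A+2}=1$, i.e. $M\mid A+2$. If $M$ is odd then $A=M/|q^2|=1$, so $M\mid 3$, giving $M=1$ with $q=\zeta_1=1$, or $M=3$ with $q=e^{\pm 2\pi i/3}$ and $\zeta_3=e^{\mp 2\pi i/3}$. If $M$ is even then $|q^2|=M/2$ and hence $A=2$, so $M\mid 4$, giving $M=2$ with $q=\zeta_2=-1$, or $M=4$ with $q=\pm i$ and $\zeta_4=\mp i$. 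In all four cases $A+1\in\{2,3\}$ is coprime to $M$, so $\cF_{(g,\zeta_M)}$ really is defined, and this is precisely the asserted list.

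I expect the main obstacle to be the diagrammatic bookkeeping in the second step: carrying out the isotopies so that the ``junk'' crossings $\sigma_{X_m,g^n}$, $\sigma_{X_m,Y_n}$ and one fusion vertex cancel cleanly between the two legs, and pinning down the surviving scalar as exactly $q^{mn}\zeta_M^{mn}$ — in particular getting the sign of the exponent of $q$ right, which can be cross-checked against the normalisation $\zeta_M^{A}=q^2$. Everything after that is routine number theory.
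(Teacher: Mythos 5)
Your proposal is correct and follows essentially the same route as the paper: reduce the braided-compatibility square to the scalar condition $(q\zeta_M)^{mn}=1$, specialise to $m=n=1$ to get $\zeta_M=q^{-1}$, and then combine primitivity of $q$ with $\zeta_M^A=q^2$ (i.e.\ $M\mid A+2$) to enumerate the four cases. Your parity case-split in the final number-theoretic step and your explicit justification that $\cC_1\neq 0$ are minor variations on the paper's argument, not a different method.
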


\begin{proof}
For the monoidal auto-equivalence $\cF_{(g,\zeta_M)}$ to be braided we exactly need the following equality of morphisms
\[ \raisebox{-.5\height}{ \includegraphics[scale = .5]{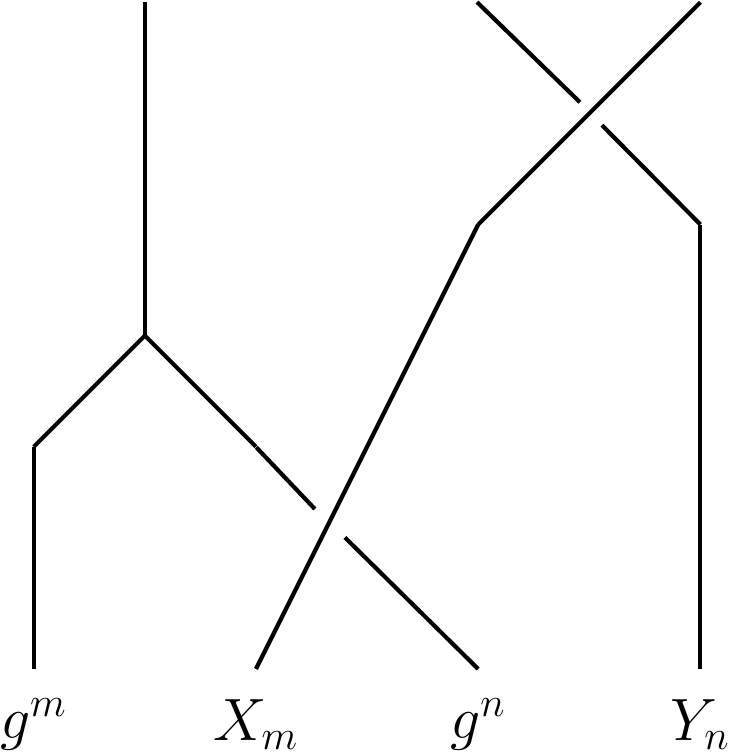}} \quad = \quad \raisebox{-.5\height}{ \includegraphics[scale = .5]{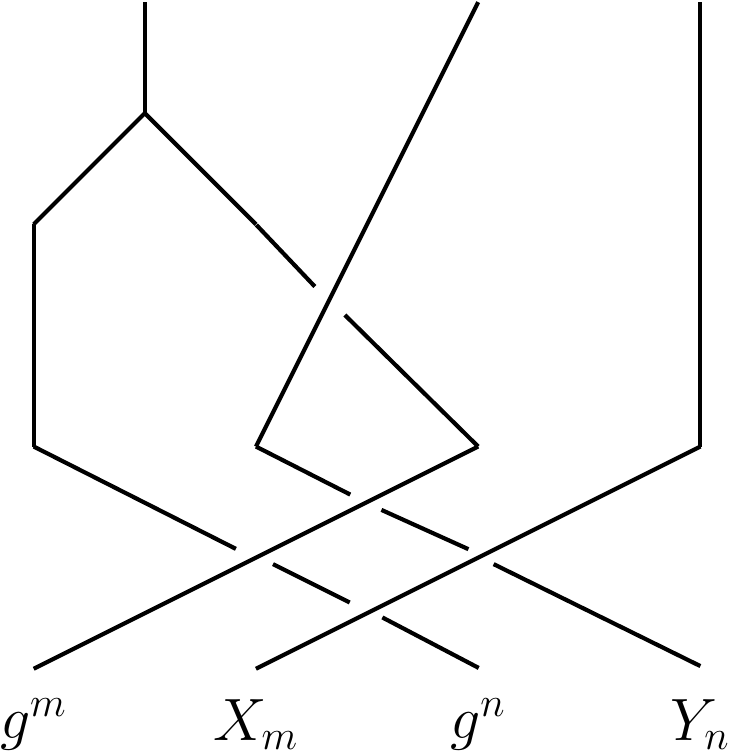}} \]
for all $X_m \in \cC_m$ and $Y_n \in \cC_n$. We can write 
\[  \raisebox{-.5\height}{ \includegraphics[scale = .5]{braided1}}  \quad = \quad \zeta_M^{mn} \raisebox{-.5\height}{ \includegraphics[scale = .5]{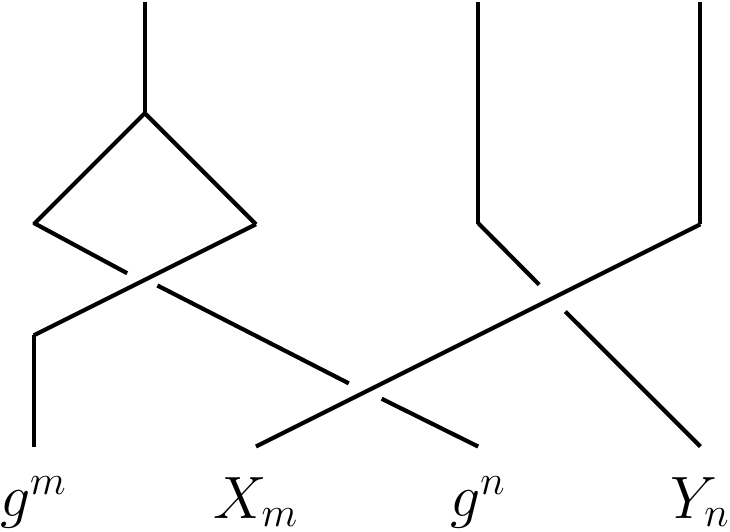}}  \quad = \quad \zeta_M^{mn}q^{mn} \raisebox{-.5\height}{ \includegraphics[scale = .5]{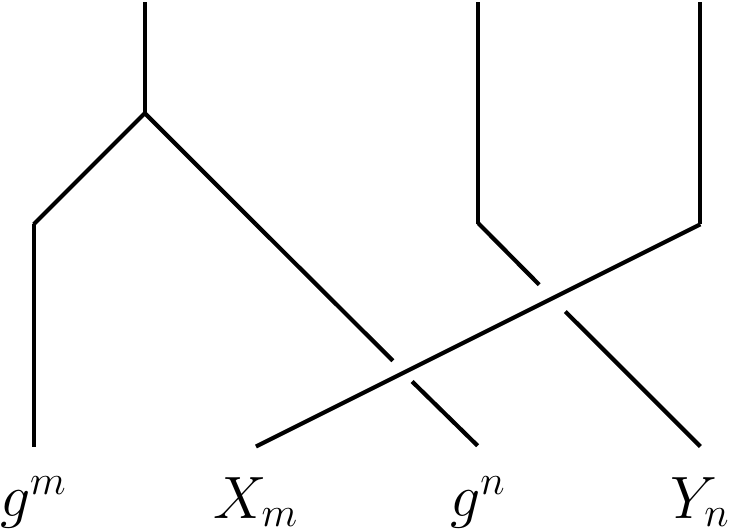}} \]
Hence the monoidal functor $\cF_{(g,\zeta_M)}$  is braided if and only if 
\[    \zeta_M^{mn}q^{mn} =1\text{ for all } m,n \in \Z{m}.\]
Setting $m = 1 = n$ shows that $\zeta_M  = q^{-1}$. Hence $q$ must be a primitive $M-th$ root of unity. Further, recall that $\zeta_M^A = q^2$. Thus we have
\[    q^{-A} = q^2,\]
and hence $2 + A$ must be a multiple of $M$. 

If $M \leq 2$ then an exhaustive search reveals the solutions 
\[ (M=1, q = 1, \zeta_1 = 1)  \quad \text{ and } \quad  (M=2, q = -1, \zeta_2 = -1). \]

If $M>2$, then $2 + A = M$, which then implies that 
\[ \frac{M}{|q^2|} = M-2.\]
Hence $M-2$ divides $M$, and so $M$ must be either $3$ or $4$. Another exhaustive search reveals the solutions 
\[   (M= 3, q = e^\frac{\pm 2 \pi i }{3}, \zeta_3 = e^\frac{\mp 2 \pi i }{3}) \quad \text{ and } \quad (M =4, q = \pm i, \zeta_4 = \mp i).\]

\end{proof}

\begin{rmk}
We note that when the order of $g$ is 3 and $q = \pm e^\frac{2 \pi i }{3}$ then $g$ generates a $\Z{3}$ modular subcategory of $\cC$. Thus $\cC$ factorises as $\Z{3} \boxtimes \cC_1$. The braided automorphism $\cF_{(g,q^{-1})}$ constructed above is exactly the braided auto-equivalence of $\Z{3} \boxtimes \cC_1$ induced by the braided auto-equivalence of the $\Z{3}$ factor.
\end{rmk}

We also investigate when the monoidal auto-equivalence $\cF_{(g,\zeta_{M})}$ is pivotal.

\begin{lemma}
The monoidal auto-equivalence $\cF_{(g,\zeta_{M})}$ is pivotal if and only if the categorical dimension of $g$ is equal to $1$.
\end{lemma}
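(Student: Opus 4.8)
The plan is to unwind the definition of pivotality given in the preliminaries and reduce it to a scalar identity involving the categorical dimension of $g$. Since $\cC$ is a modular tensor category it comes with a spherical pivotal structure $\psi_X : X \to X^{**}$, and our monoidal auto-equivalence $\cF := \cF_{(g,\zeta_M)}$ sends $X_m$ to $g^m \otimes X_m$. First I would compute the natural isomorphism $\delta_X$ explicitly: for $X_m \in \cC_m$ we have $\cF(X_m)^* \cong X_m^* \otimes (g^m)^* \cong X_m^* \otimes g^{-m}$ and $\cF(X_m^*) = g^{m'} \otimes X_m^*$ where $m'$ is determined by the grading of $X_m^*$ (namely $m' = -m$, since $X_m^* \in \cC_{-m}$ by the compatibility of duals with the faithful grading). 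Plugging the chosen monoidal structure maps $\tau$ into the formula $\delta_X := ((\cF(\operatorname{ev}_X)\circ \tau_{X^*,X})\otimes \id_{\cF(X)^*})\circ (\id_{\cF(X^*)}\otimes \operatorname{coev}_{\cF(X)})$, the only nontrivial ingredients are the trivalent vertices and cup/cap morphisms for the powers of $g$, which are governed by the 6j-ology and R-relations already fixed in the excerpt. I expect $\delta_X$ to differ from the "expected" duality isomorphism by a scalar that depends only on $m$ and on the pivotal data of $g$, concretely a power of the categorical dimension $\dim(g)$ together with a sign/root-of-unity coming from the braiding $q$.

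Next I would write out the pivotality condition $\delta_{X^*} \circ \cF(\psi_X) = \delta_X^* \circ \psi_{\cF(X)}$. Since $\psi$ is the genuine pivotal structure of $\cC$ and $\cF$ is built from tensoring with an invertible object, $\cF(\psi_{X_m}) = \id_{g^m} \otimes \psi_{X_m}$ up to the canonical identifications, and $\psi_{\cF(X_m)} = \psi_{g^m} \otimes \psi_{X_m}$. Thus after cancelling the common factor $\psi_{X_m}$ (which is legitimate because everything in sight is a tensor product with the fixed isomorphism on the $X_m$-leg), the condition collapses to an identity purely about the invertible object: it asserts that the two ways of producing an isomorphism $g^m \to g^m{}^{**}$ — one via $\delta_{X^*}$ composed with $\psi_{g^m}$-type data, the other via $\delta_{X}^*$ — agree. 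Because $\dim(g^m) = \dim(g)^m$ and the pivotal isomorphism $\psi_{g}$ is itself multiplication by $\dim(g)$ (for an invertible object $\psi_g$ is a scalar, and sphericity forces that scalar to be $\dim(g)$ up to the conventions used), the whole thing reduces to requiring $\dim(g)^m = 1$ for the relevant $m$, equivalently — running over all graded components and using that the grading is faithful so that $m$ ranges over a generating set of $\Z{M}$ — to $\dim(g) = 1$.

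The key steps, in order: (1) identify $\cF(X_m)^*$, $\cF(X_m^*)$ and the action of $\cF$ on $\operatorname{ev}$, $\operatorname{coev}$, $\psi$ in terms of the powers of $g$; (2) evaluate $\delta_{X_m}$ as a scalar multiple of the standard duality morphism, tracking the contribution of the trivalent vertices via the fixed 6j- and R-data and the contribution of $\psi_{g^m} = \dim(g)^m$; (3) substitute into $\delta_{X^*} \circ \cF(\psi_X) = \delta_X^* \circ \psi_{\cF(X)}$, cancel the $X_m$-leg, and read off the scalar equation; (4) conclude that pivotality holds for all $X$ iff $\dim(g)^m = 1$ for all $m$ in the support of the grading iff $\dim(g) = 1$, using faithfulness of the grading (and noting $\dim(g)$ is a root of unity so no subtlety about which power forces triviality). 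The main obstacle I anticipate is step (2): carefully bookkeeping the scalar produced by $\delta_{X_m}$, since it is easy to conflate the braiding scalar $q$ (which could in principle enter through the monoidal structure maps $\tau$ built from the braiding) with the genuinely pivotal contribution $\dim(g)$; one must check that the $q$-dependent factors cancel between $\delta_{X^*} \circ \cF(\psi_X)$ and $\delta_X^* \circ \psi_{\cF(X)}$, leaving only the dimension. A clean way to organize this is to first treat the sub-case $\cC = \Vect_{\Z{M}}^\omega$ with its braiding, where the computation is entirely explicit, and then observe that for general $\cC$ the relevant morphisms all live in the braided subcategory generated by $g$, so the general case follows from the pointed case.
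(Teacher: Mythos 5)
Your overall strategy is the same as the paper's: unwind the definition of a pivotal functor, observe that everything on the $X_m$-leg cancels, and reduce pivotality to the scalar condition $\psi_{g^m}=\id_{g^m}$ for all $m$, i.e.\ $\psi_g=\id_g$. Up to that point your plan is sound (and your remark that the grading is fully supported, so $m=1$ occurs, is the right justification for passing from ``all $m$'' to ``$m=1$'').

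The genuine gap is the final step, which you flag as the ``main obstacle'' but do not resolve: you assert that $\psi_g$ \emph{is} multiplication by $\dim(g)$ ``up to the conventions used.'' Relative to the cups, caps and trivalent vertices fixed earlier in the paper, this is false in general: the paper computes $\dim(g)=\psi_g\cdot q^M$, so the identification $\psi_g=\dim(g)$ holds only when $q^M=1$. That is exactly where Lemma~3.3 enters --- under the standing hypothesis that $A+1$ is coprime to $M$, the braiding eigenvalue $q$ is an $M$-th root of unity (rather than merely a $2M$-th root when $M$ is even), so $q^M=1$ and the two scalars agree. Without this input the equivalence ``$\psi_g=\id_g$ iff $\dim(g)=1$'' does not follow, and indeed the only case where it could fail ($M$ even, $q$ a primitive $2M$-th root of unity) is precisely the case excluded by the coprimality hypothesis. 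So you need to actually carry out the bookkeeping of the loop value $q^M$ and invoke Lemma~3.3; that computation is the entire content of the lemma beyond the formal unwinding. Your suggested reduction to the pointed subcategory generated by $g$ is a reasonable way to organise it, but it does not make the $q^M$ factor go away --- it only localises where it must be computed.
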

\begin{proof}
From the definition of a pivotal functor we see that $\cF_{(g,\zeta_{M})}$ is pivotal if and only if
\[\raisebox{-.5\height}{ \includegraphics[scale = .5]{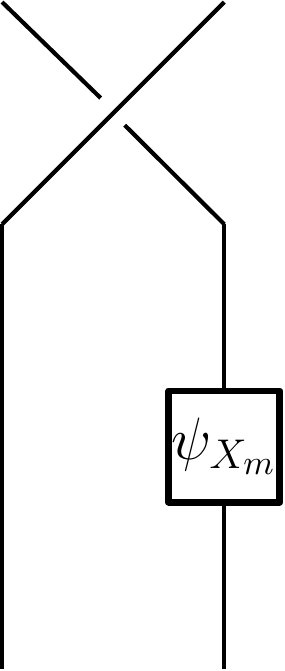}} \quad =\quad \raisebox{-.5\height}{ \includegraphics[scale = .5]{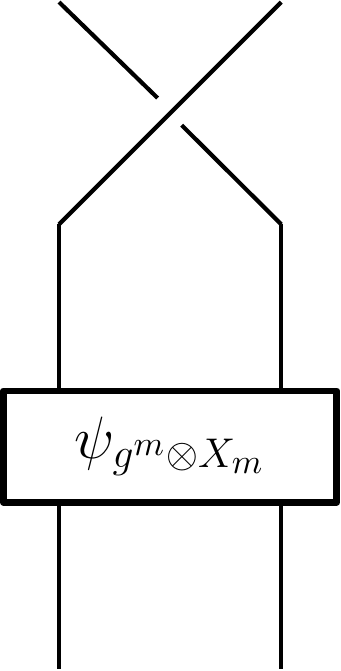}} \quad = \quad \raisebox{-.5\height}{ \includegraphics[scale = .5]{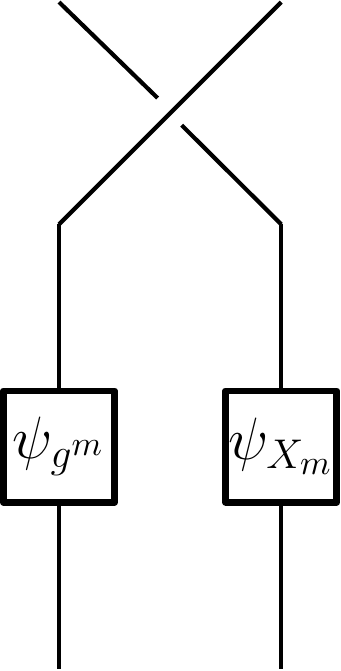}}   \]
for all $X_m \in \cC$. Hence, $\cF_{(g,\zeta_{M})}$ is pivotal if and only if $\psi_{g^m} = \id_{g^m}$ for all $m\in \Z{M}$, which is equivalent to having $\psi_g = \id_g$.

From the 6j symbols above, we can compute that the categorical dimension of $g$ is equal to $\psi_g \cdot q^M$. Hence the result follows after applying Lemma~\ref{lem:triv6}.
\end{proof}

%
%We now summarise all the information of this section into our main Theorem.
%
%\begin{theorem}\label{thm:main}
%Let $\cC$ be a modular tensor category and let $g \in \cC$ be an invertible object of order $M$. Let $q$ be the unique eigenvalue of the braid $\sigma_{g,g}$, and fix $\zeta_M$ a primitive $M$-th root of unity such that $\zeta_M^\frac{M}{|q^2|} = q^2$. 
%
%Suppose $\frac{M}{|q^2|} + 1$ is coprime to $M$, then there exists a monoidal auto-equivalence $\cF_{(g,\zeta_M)}$ of $\cC$ defined by
%\[    \cF_{(g,\zeta_M)}(X) := g^n \otimes X,\]
%where $n$ is such that $\zeta_M^n$ is equal to the unique eigenvalue of the braid $\sigma_{g,X}\circ \sigma_{X,g}$.
%
%The auto-equivalence $\cF_{(g,\zeta_{M})}$ is braided if and only if
%\begin{itemize}
%\item The order of $g$ is 1, in which case $q$ must be 1 and there is the unique choice $\zeta_1 = 1$. 
%
%\item The order of $g$ is 2 and $q = -1$ in which case there is the unique choice $\zeta_2 = -1$.
%
%\item The order of $g$ is 3 and $q = \pm e^\frac{2 \pi i }{3}$ in which case there is the unique choice $\zeta_3 = \mp e^\frac{2 \pi i }{3}$.
%
%\item The order of $g$ is 4, $q = \pm i$, and $\zeta_4 = \mp i$.
%\end{itemize}
%
%
%The auto-equivalence $\cF_{(g,\zeta_{M})}$ is pivotal if and only if the categorical dimension of $g$ is 1.
%\end{theorem}

 Ideally one would like to understand composition of these simple current auto-equivalences. Naively one would expect some kind of behavior such as 
 \[  \cF_{g,\zeta_{|g|}} \circ \cF_{h,\zeta_{|h|}} = \cF_{gh, \zeta_{|g|}\cdot \zeta_{|h|}}.\]
 However this naive composition does not hold in general. For an example consider an object $g$ of order $4$ with braiding eigenvalue $i$. Then we have that 
 \[  \cF_{(g, \pm i)} \circ\cF_{(g, \pm i)} \cong \Id_\cC  \quad \text{ and } \quad \cF_{(g, \pm i)} \circ\cF_{(g, \mp i)} \cong \cF_{(g^2, -1)}.\]
 Thus the auto-equivalences $\{ \Id, \cF_{(g,  i)} , \cF_{(g,-i)}, \cF_{(g^2, -1)} \}$ generically form a Klein-four group.
 
 When one considers composition between simple current auto-equivalences coming from two different invertible objects, then it appears impossible to give an answer without knowledge of a large collection of braiding eigenvalues. However, if we assume that the two invertible objects braid symmetrically, then we can prove the following fact.
 
\begin{prop}\label{prop:com}
Let $\cC$ be a modular tensor category, and $g,h$ invertible objects in $\cC$ such that $\sigma_{h,g}\sigma_{g,h} = \id_{gh}$, then
\[   \cF_{g,\zeta_{|g|}}\circ \cF_{h,\zeta_{|h|}} \cong \cF_{h,\zeta_{|h|}}\circ \cF_{g,\zeta_{|g|}}.\]
\end{prop}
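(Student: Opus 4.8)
The plan is to work one graded component at a time, use the symmetric-braiding hypothesis to recognise both composites as ``tensoring on the left by $g^{m}\otimes h^{n}$'', and then build the comparison isomorphism out of the braiding of powers of $g$ past powers of $h$.

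Write $\cC=\bigoplus_{m\in\Z{|g|}}\cC_m$ for the faithful grading used to define $\cF_{g,\zeta_{|g|}}$, and $\cC=\bigoplus_{n\in\Z{|h|}}\cC'_n$ for the one used to define $\cF_{h,\zeta_{|h|}}$. Passing to the common refinement, it suffices to treat an object $X\in\cC_m\cap\cC'_n$. The hypothesis $\sigma_{h,g}\circ\sigma_{g,h}=\id_{g\otimes h}$ says exactly that the double braiding of $h$ with $g$ is trivial; since $\zeta_{|g|}$ is a primitive root of unity this forces $h$, and hence every power of $h$, to lie in the trivial component $\cC_0$, and symmetrically $g$ and its powers lie in $\cC'_0$. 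As the gradings are tensor gradings we get $h^{n}\otimes X\in\cC_m$ and $g^{m}\otimes X\in\cC'_n$, so that
\[
  \cF_{g,\zeta_{|g|}}\bigl(\cF_{h,\zeta_{|h|}}(X)\bigr)=g^{m}\otimes h^{n}\otimes X
  \qquad\text{and}\qquad
  \cF_{h,\zeta_{|h|}}\bigl(\cF_{g,\zeta_{|g|}}(X)\bigr)=h^{n}\otimes g^{m}\otimes X .
\]

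I would then take as comparison morphism $\eta_X:=\sigma_{g^{m},h^{n}}\otimes\id_X$, inserted among the evident associators; this is independent of the choice of over- versus under-crossing because triviality of the $g$--$h$ double braiding gives $\sigma_{g^{m},h^{n}}=\sigma_{h^{n},g^{m}}^{-1}$. Naturality in $X$ is immediate: on a morphism of $\cC_m\cap\cC'_n$ both composite functors act as $\id_{g^{m}}\otimes\id_{h^{n}}\otimes(-)$, which commutes with $\eta_X$ by bifunctoriality of $\otimes$, since $\eta_X$ leaves the last tensor factor untouched.

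The substantive point, and the step I expect to be the main obstacle, is to verify that $\eta$ is \emph{monoidal}, that is, compatible with the tensor structure maps $\tau$ of the two composite functors. For this I would expand the structure map of $\cF_{g,\zeta_{|g|}}\circ\cF_{h,\zeta_{|h|}}$, and that of the reverse composite, in the graphical calculus: each is a composite of braidings that gather the $g$-strands and the $h$-strands together, followed by the chosen trivalent vertices of \cite[Section~2.5]{MR1734419}. The monoidality square then reduces to the assertion that gathering the $g$-strands before or after the $h$-strands yields the same morphism, which follows from the hexagon axioms and naturality of the braiding of $\cC$, together with triviality of the $g$--$h$ double braiding (so that $g$- and $h$-strands may be slid past one another coherently), and from compatibility of the chosen vertices with the $6j$- and $R$-data of \cite{MR1734419}. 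The bookkeeping involved, keeping track of associators and of the $R$-matrix phases $q^{nm}$ appearing in the definition of $\tau$, is the only part that is not purely formal; once it is carried out, $\eta$ provides the desired monoidal natural isomorphism.
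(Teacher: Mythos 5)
Your proposal is correct and follows essentially the same route as the paper's proof: both identify the two composites on a common graded piece as $g^{m}\otimes h^{n}\otimes X$ and $h^{n}\otimes g^{m}\otimes X$ (the paper does this by directly checking that $g^{m}\otimes X$ retains the $h$-degree of $X$, which is equivalent to your observation that $h$ lies in the trivial $g$-graded component), and both take the braiding of $g^{m}$ past $h^{n}$ as the comparison isomorphism, with monoidality reduced to the symmetric braiding of $g$ with $h$. No substantive difference.
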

\begin{proof}
Let $q_g$ be a the braiding eigenvalue of $g$, and $q_h$ be the braiding eigenvalue of $h$. As we now have two gradings on $\cC$, we write $X_{m_1, m_2}$ to indicate that $X_{m_1, m_2}$ lives in $\cC_{m_1}$ with respect to the grading induced by $g$, and lives in $\cC_{m_2}$ with respect to the braiding induced by $h$.

Let us first consider the object $ \cF_{h,\zeta_{|h|}}\circ \cF_{g,\zeta_{|g|}} (X_{m_1, m_2})$. By definition we have
\[  \cF_{h,\zeta_{|h|}}\circ \cF_{g,\zeta_{|g|}} (X_{m_1, m_2}) =   \cF_{h,\zeta_{|h|}}( g^{m_1}\otimes  X_{m_1, m_2}).\]
As the objects $g$ and $h$ braid symmetrically with each other, we then have
\[  \raisebox{-.5\height}{ \includegraphics[scale = .4]{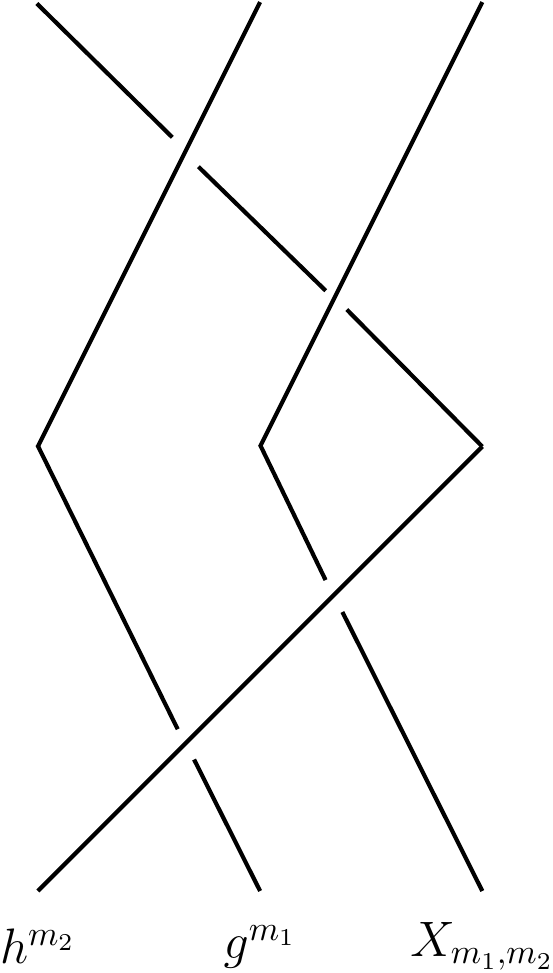}}\quad = \quad\zeta_{|h|}^{m_2} \raisebox{-.5\height}{ \includegraphics[scale = .4]{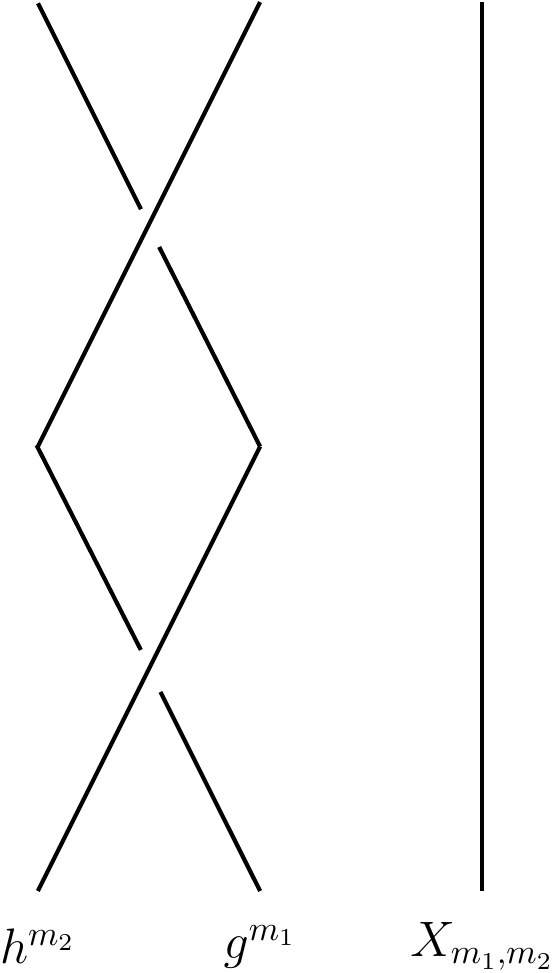}} \quad = \quad \zeta_{|h|}^{m_2}\raisebox{-.5\height}{ \includegraphics[scale = .4]{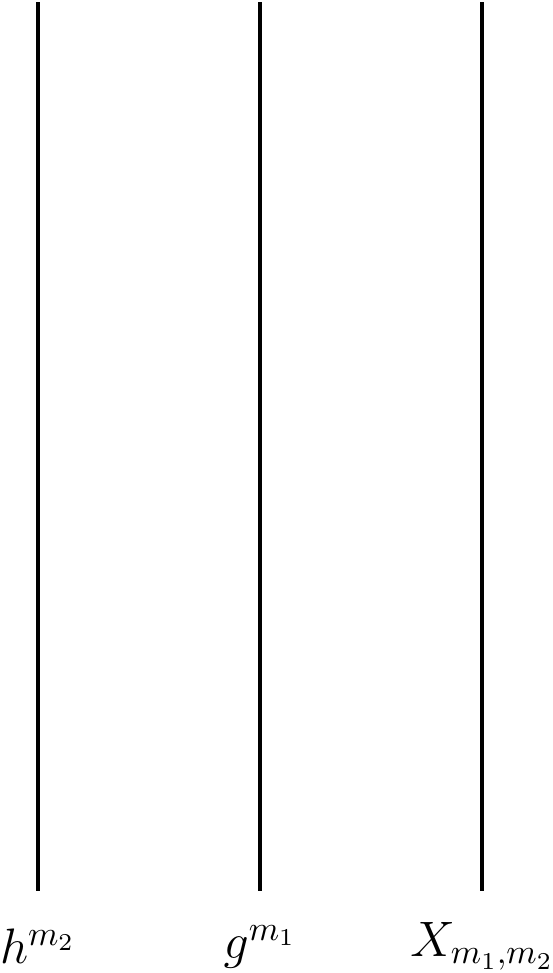}}.\]
Therefore $g^{m_1}\otimes  X_{m_1, m_2}$ lives in $\cC_{m_2}$ with respect to the grading induced by $h$. Thus
\[   \cF_{h,\zeta_{|h|}}( g^{m_1}\otimes  X_{m_1, m_2})= h^{m_2}\otimes g^{m_1}\otimes  X_{m_1, m_2}.\]

Essentially the same argument shows that 
\[\cF_{g,\zeta_{|g|}}\circ \cF_{h,\zeta_{|h|}} (X_{m_1, m_2}) = g^{m_1}\otimes h^{m_2}\otimes  X_{m_1, m_2}.\]

We construct a natural isomorphism 
\[ \eta_{X_{m_1, m_2}}:=  \raisebox{-.5\height}{ \includegraphics[scale = .5]{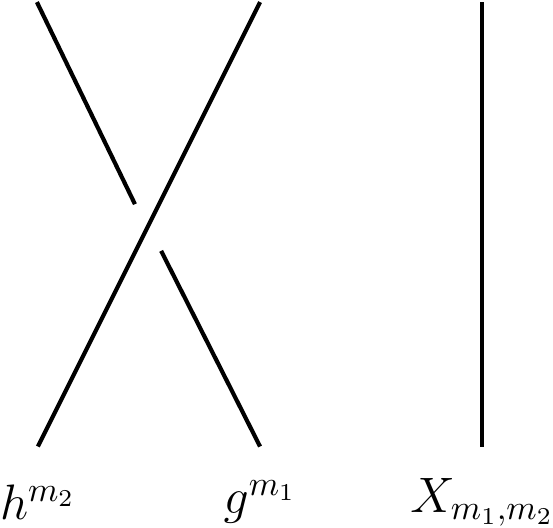}} :    \cF_{h,\zeta_{|h|}}\circ \cF_{g,\zeta_{|g|}}(X_{m_1, m_2}) \to \cF_{g,\zeta_{|g|}}\circ \cF_{h,\zeta_{|h|}}(X_{m_1, m_2}). \]

Checking that $\eta$ is monoidal amounts to showing that
\[  \raisebox{-.5\height}{ \includegraphics[scale = .5]{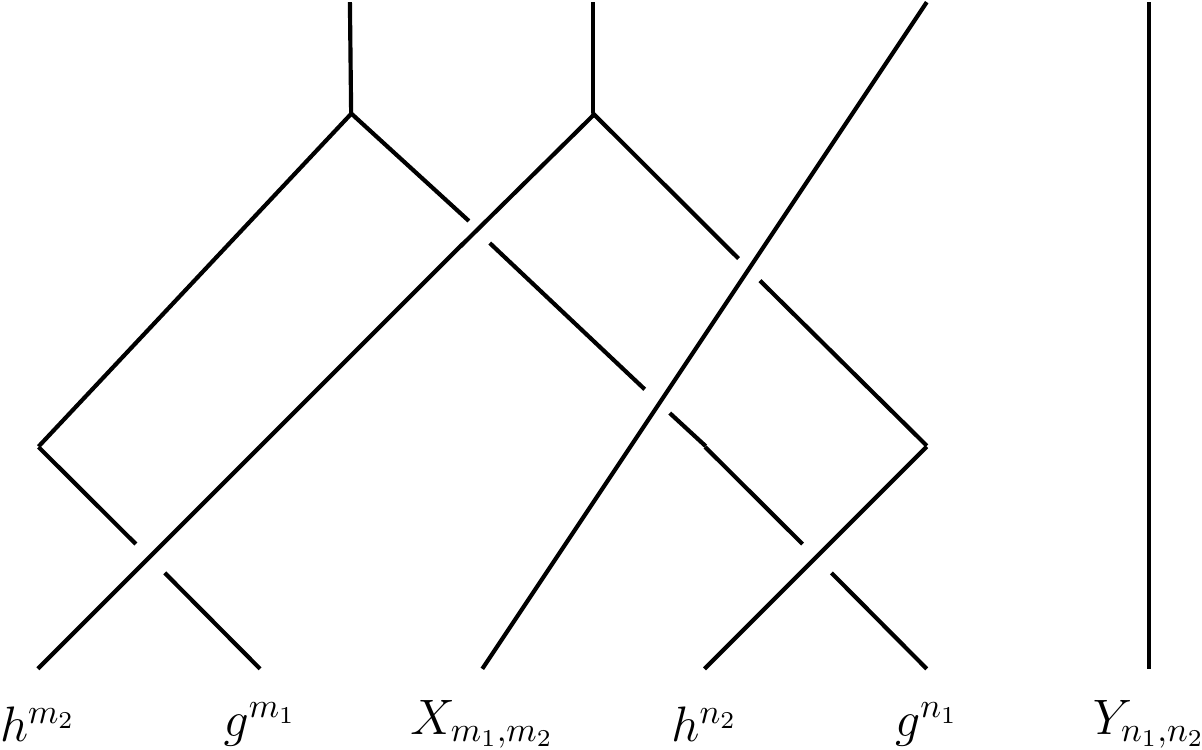}} \quad = \quad  \raisebox{-.5\height}{ \includegraphics[scale = .5]{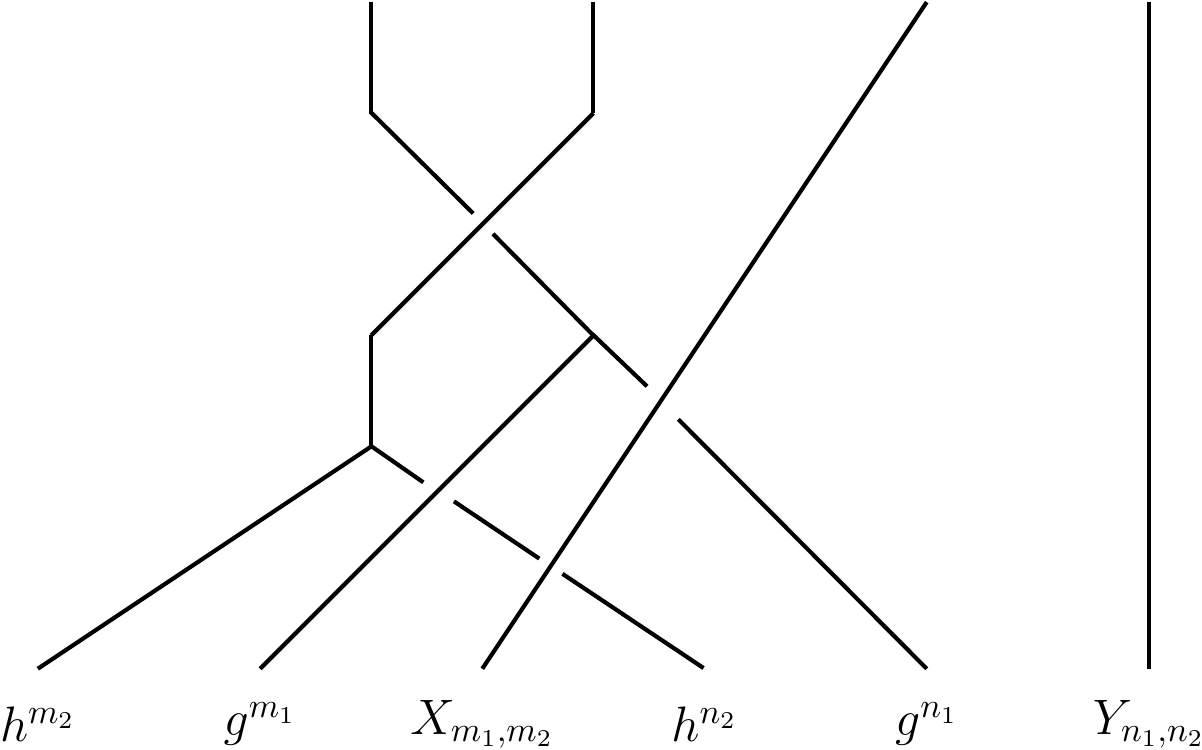}}  ,\]
which follows as $g$ and $h$ braid symmetrically.
\end{proof}
 
One would ideally like to know the order of the monoidal auto-equivalence $\cF_{g,\zeta_{|g|}}$. The following Lemma gives an upper bound for this order. The only examples we are aware of where this bound is not sharp are modular tensor categories of the form $\cC \boxtimes \mathcal{D}$, where $\cC$ is an Ising category, $\mathcal{D}$ is an arbitrary modular tensor category, and $g$ is the non-trivial invertible object in the Ising factor.

\begin{prop}\label{prop:order}
Let $K$ be an integer such that 
\[ (A+1)^K \equiv 1 \pmod {A\cdot M}\]
then 
\[ \cF_{g,\zeta_{|g|}}^K \cong \Id_\cC.\] 
\end{prop}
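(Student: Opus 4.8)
The plan is to iterate the functor $\cF_{(g,\zeta_M)}$ and track what happens to an object $X_m \in \cC_m$ both at the level of the grading and at the level of the actual power of $g$ that gets attached. Recall from the proof of Lemma~\ref{lem:auto} that $\cF_{(g,\zeta_M)}$ sends $X_m \in \cC_m$ to $g^m \otimes X_m \in \cC_{m(A+1)}$. Iterating, one computes by induction that $\cF_{(g,\zeta_M)}^K(X_m) = g^{r} \otimes X_m$, where $r$ is the exponent accumulated from the geometric-type sum $m(1 + (A+1) + (A+1)^2 + \cdots + (A+1)^{K-1})$, interpreted modulo $M$ (since $g$ has order $M$), and the final object lies in $\cC_{m(A+1)^K}$. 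So the first step is to establish this formula for $\cF^K$ on homogeneous objects.

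Next I would observe that the hypothesis $(A+1)^K \equiv 1 \pmod{A \cdot M}$ does two things at once. First, reducing mod $M$ it gives $(A+1)^K \equiv 1 \pmod M$, so $\cF^K$ preserves every graded piece $\cC_m$, which is the necessary first check for $\cF^K$ to be naturally isomorphic to $\Id_\cC$. Second, the stronger congruence mod $A \cdot M$ is exactly what is needed to pin down the accumulated exponent $r$: since $(A+1) - 1 = A$, the telescoping identity $\bigl((A+1) - 1\bigr)\bigl(1 + (A+1) + \cdots + (A+1)^{K-1}\bigr) = (A+1)^K - 1$ reads $A \cdot \bigl(1 + (A+1) + \cdots + (A+1)^{K-1}\bigr) = (A+1)^K - 1 \equiv 0 \pmod{A \cdot M}$, hence $1 + (A+1) + \cdots + (A+1)^{K-1} \equiv 0 \pmod M$. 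Therefore $r \equiv 0 \pmod M$ for every $m$, so $g^r \cong \mathbf{1}$ and $\cF^K(X_m) \cong X_m$ for all homogeneous $X_m$, and by additivity $\cF^K(X) \cong X$ for all $X \in \cC$.

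Having an object-level isomorphism is not quite enough: I need a \emph{monoidal} natural isomorphism $\cF^K \cong \Id_\cC$. The natural candidate is to use the chosen basis vectors of the Hom-spaces $\Hom(g^r \otimes X_m \to X_m)$ coming from the trivial-6j structure maps of Section~3 (composing the basis elements $\Hom(g^j \otimes g^{j'} \to g^{j+j'})$ with the canonical isomorphism $g^M \cong \mathbf{1}$), exactly as in the construction of $\tau$. One then has to verify that these assemble into a natural transformation commuting with all morphisms of $\cC$ (this uses naturality of the braiding, as in Lemma~\ref{lem:auto}) and that the resulting transformation intertwines the $K$-fold composite monoidal structure $\tau^{(K)}$ on $\cF^K$ with the trivial monoidal structure on $\Id_\cC$ — i.e.\ a hexagon-style diagram chase built from the $\alpha_{m,n,p}$ being trivial (Lemma~\ref{lem:triv6}) and the R-matrix relations $q^{nm}$ from Section~3. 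I expect this last monoidal-coherence verification to be the main obstacle: the bookkeeping of which powers of $q$ and $\zeta_M$ appear when one both reassociates the stack of $g$'s and slides $X_m$ past them is where all the subtlety of the "surprisingly non-trivial" choice of $A$-th root of $q^2$ reappears, and one must check these phases all cancel. Everything else is the elementary number theory of the telescoping identity above.
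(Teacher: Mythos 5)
Your number-theoretic setup is identical to the paper's and is correct: the iterated formula $\cF_{g,\zeta_{|g|}}^K(X_m) = g^{m\sum_{j=0}^{K-1}(A+1)^j}\otimes X_m$, the telescoping identity $A\cdot\sum_{j=0}^{K-1}(A+1)^j = (A+1)^K-1$, and the observation that the hypothesis modulo $A\cdot M$ forces both $(A+1)^K\equiv 1 \pmod* M$ (so the grading is preserved) and $\sum_{j=0}^{K-1}(A+1)^j\equiv 0\pmod* M$ (so the accumulated power of $g$ is trivial). The gap sits exactly where you flag ``the main obstacle'' and then stop. The candidate you propose --- components built only from the chosen trivalent basis vectors composed with the canonical map $g^{m\sum(A+1)^j}\to\mathbf{1}$ --- is \emph{not} a monoidal natural isomorphism in general. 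When one resolves the crossings in $\tau^K_{Y_n,X_m}$ against the component at $Y_n\otimes X_m$ using the trivial 6j-symbols and the R-relations, the phases do not all cancel: they leave a residual factor $\epsilon^{nm}$, where $\epsilon = q^{-\sum_{i=1}^{K-1}\sum_{j=i}^{2i-1}(A+1)^j}$ is generically a nontrivial root of unity (the paper remarks it equals $1$ only when $K$ is odd). Taking $m=n=1$ shows the naive candidate is monoidal only if $\epsilon=1$.

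The missing idea is to twist the components by a quadratic phase: set $\eta_{X_m} := \epsilon^{m^2/2}\cdot(\text{naive component})$. The monoidality condition then reads $\epsilon^{-n^2/2}\,\epsilon^{-m^2/2} = \epsilon^{nm}\,\epsilon^{-(n+m)^2/2}$, which is an algebraic identity in the exponents and therefore holds automatically, absorbing the $\epsilon^{nm}$ obstruction. Without this correction the verification you defer would fail whenever $\epsilon\neq 1$, and that verification is precisely the content separating an isomorphism of abelian functors (which you do establish) from the claimed monoidal natural isomorphism $\cF_{g,\zeta_{|g|}}^K\cong\Id_\cC$.
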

\begin{proof}
A similar computation to that in Lemma~\ref{lem:auto} shows that $\cF_{g,\zeta_{|g|}}$ maps $\cC_{m(A+1)^j} \to \cC_{m(A+1)^{j+1}}$. Thus, if $X_m\in \cC_m = \cC_{m(A+1)^0}$, then 
\[ \cF_{g,\zeta_{|g|}}^K(X_m) =  g^{m \sum_{j = 0}^{K-1} (A+1)^j}\otimes X_m = g^{m\frac{ (A+1)^{K} -1 }{A}}\otimes X_m.     \]

The tensor structure maps for $\cF_{g,\zeta_{|g|}}^K$, which we will denote $\tau^K$, are given by the recursive formula:
\[    \tau^1_{X_m, Y_n} =     \raisebox{-.5\height}{ \includegraphics[scale = .5]{tensor}} \quad \text{ and } \quad \tau^K_{X_m, Y_n} =  \raisebox{-.5\height}{ \includegraphics[scale = .5]{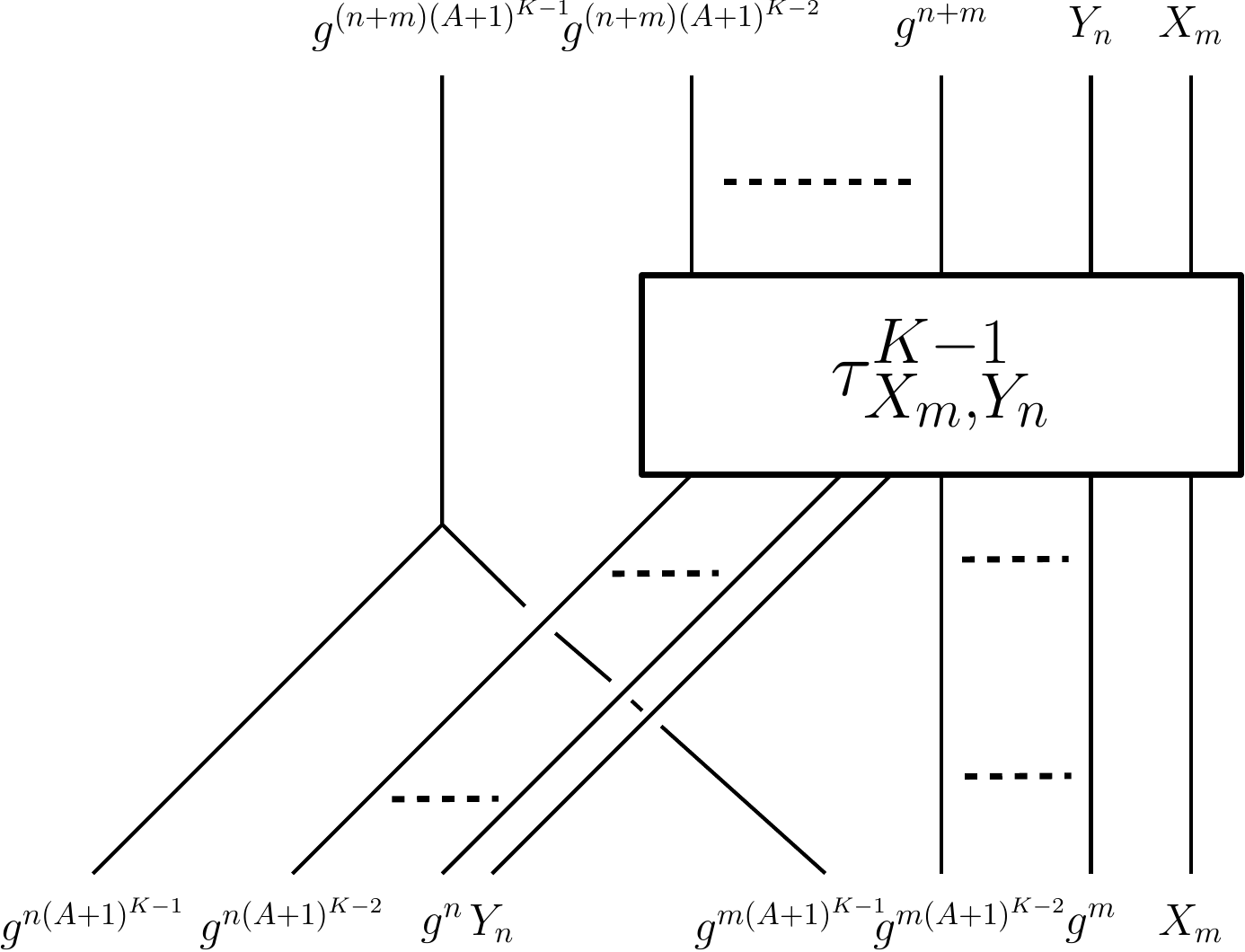}} .\]

If $K$ is an integer such that 
\[ (A+1)^K \equiv 1 \pmod {A\cdot M}\]
then $ g^{m \sum_{j = 0}^{K-1} (A+1)^j} = g^{m\frac{ (A+1)^{K} -1 }{A}} \cong \mathbf{1}$ for all $m \in \Z{M}$. In particular, the morphism
 \[ \raisebox{-.5\height}{ \includegraphics[scale = .5]{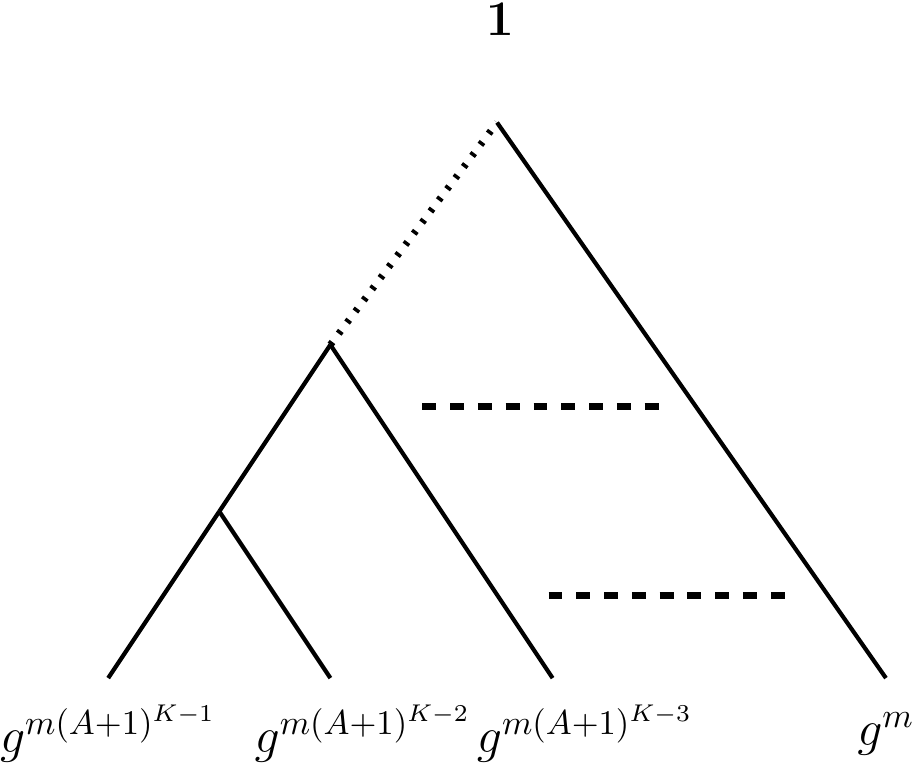}} \]
give a non-trivial map $g^{m \sum_{j = 0}^{K-1} (A+1)^j} \to \mathbf{1}$.

We claim that 
\[ \eta_{X_m} :=  \epsilon^{\frac{m^2}{2}} \raisebox{-.5\height}{ \includegraphics[scale = .5]{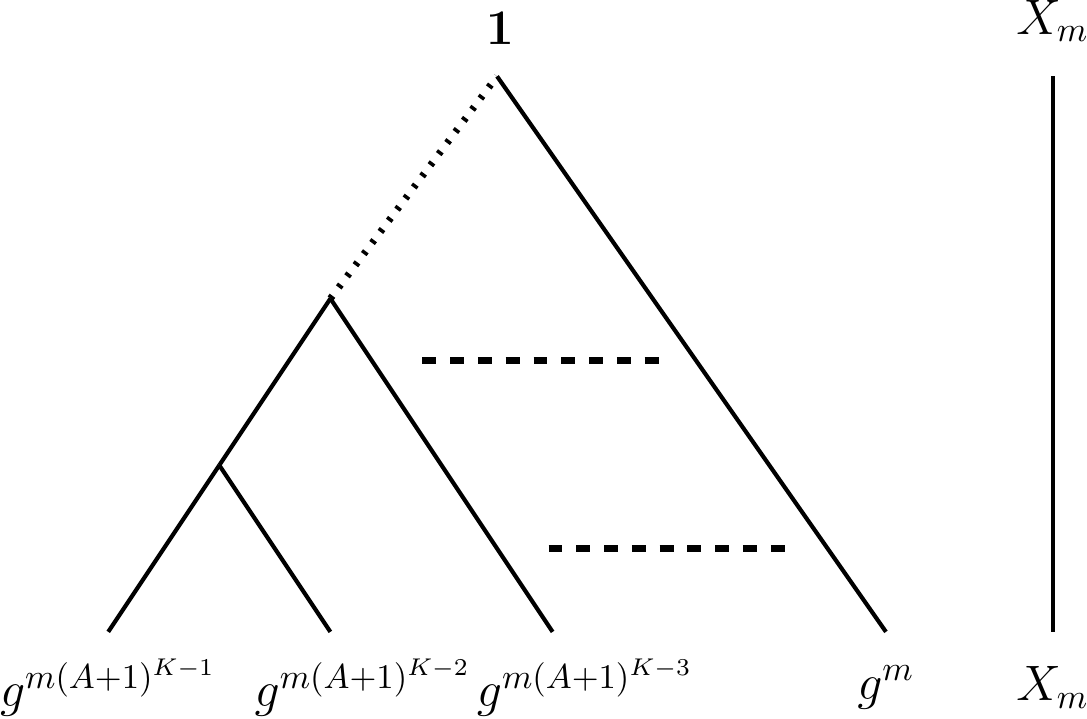}}\]
gives a monoidal natural isomorphism $\cF^K_{g, \zeta_{|g|}} \to \Id_\cC$, where 
\[\epsilon := q^{-\sum_{i =1}^{K-1} \sum_{j=i}^{2i-1} (A+1)^j  }. \]

To prove $\eta$ is a monoidal natural isomorphism we need to show that 
\[  \eta_{Y_n}\otimes \eta_{X_m} = \tau^K_{Y_n,X_m}\cdot  \eta_{Y_n \otimes X_m} \text{ for all } Y_n,X_m \in \cC.\]
Resolving the crossings in the right hand side of this equation, using the 6j-relations and the R-relations, reveals that this equation is satisfied if and only if
\[   \epsilon^{-\frac{n^2}{2}}\epsilon^{-\frac{m^2}{2}} = \epsilon^{nm} \epsilon^{-\frac{(n+m)^2}{2}} \text{ for all } m \in \Z{M}.\]
Hence we have proven that $\eta$ gives a monoidal natural isomorphism $\cF_{g,\zeta_{|g|}}^K \to \Id_\cC$.
\end{proof}

\begin{rmk} 
When $K$ is odd we have that the $\epsilon$ in the above proof is always 1.
\end{rmk}

\begin{rmk} 
By Eulers Theorem we always have the solution $K = \phi(A \cdot M)$ to 
\[ (A+1)^K \equiv 1 \pmod {A\cdot M},\]
however this solution is rarely the smallest such $K$.
\end{rmk}
%
%\begin{rmk}
%We remark that the smallest $K$ such that $(A+1)^K \equiv 1 \pmod {A\cdot M}$ holds is not necessarily the order of the auto-equivalence $\cF_{g, \zeta_{|g|}}$. For example consider an Ising category, then the non-trivial invertible object has order 2, and braiding eigenvalue $-1$, which implies that $A=2$. We can apply Theorem~\ref{thm:main} on this non-trivial invertible object to get a braided auto-equivalence of the Ising category, which is in fact trivial \cite{}. However the smallest $K$ satisfying 
%\[ (A+1)^K \equiv 1 \pmod {A\cdot M}\]
% is 2. 
%\end{rmk}	

\section{Examples}

Let us consider some examples of the practicality of Theorem~\ref{thm:main}. Our example modular tensor categories will all be categories of level $k$ integrable representations of an affine Lie algebra $\widehat{\mathfrak{g}}$, which we denote $\cC( \mathfrak{g}, k)$. For details on these categories we direct the reader to \cite{quan-primer}.

\subsection*{ $\mathfrak{sl}_4$ at level 2}

We begin with an example that illustrates the effect the choice of $\zeta_M$ has in applying Theorem~\ref{thm:main}.

We consider the modular tensor category $\cC( \mathfrak{sl}_4, 2)$. The simple objects of $\cC( \mathfrak{sl}_4, 2)$ are parametrised by triples of integers $(\lambda_1 , \lambda_2 , \lambda_3)$ such that $\lambda_1 + \lambda_2 + \lambda_3 \leq 2$. We write these simple objects as $(\lambda_1 \Lambda_1 + \lambda_2 \Lambda_2 + \lambda_3 \Lambda_3)$. In particular the category $\cC( \mathfrak{sl}_4, 2)$ contains the order 4 invertible object $2\Lambda_1$, which has braiding eigenvalue $-i$. We compute $A = 2$, thence $A+1$ is coprime to the order of $2\Lambda_1$, so we can apply Theorem~\ref{thm:main} to construct monoidal auto-equivalences of $\cC( \mathfrak{sl}_4, 2)$.

If we choose $\zeta_4 = -i$ then we get a monoidal auto-equivalence of $\cC( \mathfrak{sl}_4, 2)$ that sends 
\[ \Lambda_1 \leftrightarrow (\Lambda_1 + \Lambda_2) \quad 2\Lambda_1 \leftrightarrow 2\Lambda_3 \quad \Lambda_3 \leftrightarrow (\Lambda_2 + \Lambda_3) \]
and fixes all other objects.

If we choose $\zeta_4 = i$ then we get a braided auto-equivalence of $\cC( \mathfrak{sl}_4, 2)$ that sends
\[ \Lambda_1 \leftrightarrow \Lambda_3  \quad2\Lambda_1 \leftrightarrow 2\Lambda_3 \quad   (\Lambda_1 + \Lambda_2) \leftrightarrow (\Lambda_2 + \Lambda_3)\]
and fixes all other objects. Interestingly this braided auto-equivalence is the \textit{charge conjugation} auto-equivalence of $\cC( \mathfrak{sl}_4, 2)$. It is certainly not true in general that charge conjugation auto-equivalences can be realised as simple current auto-equivalences. An easy counter example is given by $\cC( \mathfrak{sl}_4, 4)$.

\subsection*{ $\mathfrak{sl}_6$ at level 2}

Let us now apply Theorem~\ref{thm:main} to compute the entire auto-equivalence group of a modular tensor category. Being able to compute the entire auto-equivalence group is special to this example, and in general Theorem~\ref{thm:main} will not give all auto-equivalences of a modular tensor category.

We consider the modular tensor category $\cC( \mathfrak{sl}_6,2)$. The simple objects of this category are parametrised by 5-tuples of integers $(\lambda_1 , \lambda_2 , \lambda_3 , \lambda_4, \lambda_5)$ such that $\lambda_1 + \lambda_2 + \lambda_3+ \lambda_4 + \lambda_5 \leq 2$. We write these simple objects as $(\lambda_1 \Lambda_1 + \lambda_2 \Lambda_2 + \lambda_3 \Lambda_3+ \lambda_4 \Lambda_4+ \lambda_5 \Lambda_5)$. Hence this category has 21 simple objects. In particular we have the five non-trivial invertible objects
\begin{center}
\begin{tabular}{ c | c c }
  \text{ Invertible Object} & \text{Order} & \text{Braiding Eigenvalue}  \\
  \hline \\[-1em]
$2\Lambda_1$ &6 & $e^{2\pi i \frac{5}{6}}$  \\
$2\Lambda_2$ &3 &  $e^{2\pi i \frac{1}{3}}$  \\
$2\Lambda_3$ &2 & $-1$  \\
$2\Lambda_4$ &3 & $e^{2\pi i \frac{1}{3}}$   \\
$2\Lambda_5$ &6 & $e^{2\pi i \frac{5}{6}}$  \\
\end{tabular}
\end{center}

We compute that $A+1$ is coprime to the order only for the invertible objects $2\Lambda_2$, $2\Lambda_3$, and $2\Lambda_4$. Thus we can apply Theorem~\ref{thm:main} to get the three monoidal auto-equivalences
\[  \cF_{(2\Lambda_2, e^{2\pi i \frac{2}{3}})},\qquad  \cF_{(2\Lambda_4, e^{2\pi i \frac{2}{3}})},  \quad \text{ and } \quad   \cF_{(2\Lambda_3, -1)}, \]
the last of which is a braided auto-equivalence. As there are too many simple objects to completely describe completely how each auto-equivalence behaves, we simply describe how each auto-equivalence acts on the tensor generator $\Lambda_1$. It follows from \cite{MR1237835} that this completely describes the auto-equivalence. We have
\begin{align*}
 \cF_{(2\Lambda_2, e^{2\pi i \frac{2}{3}})}( \Lambda_1 ) &= (\Lambda_2+\Lambda_3) \\
 \cF_{(2\Lambda_4, e^{2\pi i \frac{2}{3}})}( \Lambda_1) &= (\Lambda_2+\Lambda_3) \\
  \cF_{(2\Lambda_3, -1)}( \Lambda_1 ) &=  (\Lambda_3+\Lambda_4) .
\end{align*}
 A conceptionally easy, but computationally hard planar algebra argument shows that the category $\cC( \mathfrak{sl}_6,2)$ has no non-trivial monoidal auto-equivalences that fix every object. Therefore we see that 
 \[ \cF_{(2\Lambda_2, e^{2\pi i \frac{2}{3}})} \cong  \cF_{(2\Lambda_4, e^{2\pi i \frac{2}{3}})}.\]
 Further, an application of Proposition~\ref{prop:order} shows that each of these auto-equivalences has order 2. Thus to determine the subgroup structure of the auto-equivalences generated by $\cF_{(2\Lambda_2, e^{2\pi i \frac{2}{3}})}$ and $  \cF_{(2\Lambda_3, -1)}$ we simply need to determine how they compose with each other. We find that 
 \[    \cF_{(2\Lambda_2, e^{2\pi i \frac{2}{3}})}\circ \cF_{(2\Lambda_3, -1)}( \Lambda_1) = \Lambda_5 = \cF_{(2\Lambda_3,-1)}\circ    \cF_{(2\Lambda_2, e^{2\pi i \frac{2}{3}})}( \Lambda_1)  .\]
 Thus the simple current auto-equivalences  $\cF_{(2\Lambda_2, e^{2\pi i \frac{2}{3}})}$ and $  \cF_{(2\Lambda_3, -1)}$ generate a $\Z{2}\times \Z{2}$ group. We claim without proof that $\Z{2}\times \Z{2}$ is the entire auto-equivalence group of $\cC( \mathfrak{sl}_6,2)$.

\subsection*{ $\mathfrak{so}_8$ at level 2}

Our final example will consider a modular tensor category whose invertible objects form a $\Z{2}\times \Z{2}$ group. 

We consider the modular tensor category $\cC( \mathfrak{so}_8,2)$. The simple objects of this category are parametrised by 4-tuples $(\lambda_1, \lambda_2, \lambda_3, \lambda_4)$ such that $\lambda_1 + 2\lambda_2 + \lambda_3 + \lambda_4 \leq 2$. We write these simple objects as $(\lambda_1 \Lambda_1 + \lambda_2 \Lambda_2 + \lambda_3 \Lambda_3+ \lambda_4 \Lambda_4)$. This category has 11 simple objects. In particular we have the three non-trivial invertible objects $2\Lambda_1, 2\Lambda_3$, and $2\Lambda_4$. Each of these invertible objects has order 2, and braid eigenvalue $1$. An application of Theorem~\ref{thm:main} gives the three auto-equivalences
\[  \cF_{(2\Lambda_1,-1)},\qquad \cF_{(2\Lambda_3,-1)},  \quad \text{ and } \quad   \cF_{(2\Lambda_4, -1)}. \]
We directly compute how each of these auto-equivalences acts on the simple objects of $\cC( \mathfrak{so}_8,2)$:

The auto-equivalence $\cF_{(2\Lambda_1,-1)}$ sends
\[   (\Lambda_1 + \Lambda_3) \leftrightarrow \Lambda_4 \quad \text{ and } \quad \Lambda_3 \leftrightarrow (\Lambda_1+\Lambda_4),\]
and fixes all other simple objects.

The auto-equivalence $\cF_{(2\Lambda_3,-1)}$ sends
\[   (\Lambda_1 + \Lambda_3)  \leftrightarrow \Lambda_4 \quad \text{ and } \quad \Lambda_1 \leftrightarrow  (\Lambda_3 + \Lambda_4) ,\]
and fixes all other simple objects.

The auto-equivalence $\cF_{(2\Lambda_4,-1)}$ sends
\[    \Lambda_3 \leftrightarrow (\Lambda_1+\Lambda_4) \quad \text{ and } \quad  \Lambda_1 \leftrightarrow  (\Lambda_3 + \Lambda_4),\]
and fixes all other simple objects.

We state without proof that the category $\cC( \mathfrak{so}_8,2)$ has a single non-trivial auto-equivalence that fixes every object. This fact allows us to see that the auto-equivalences  
\[\{ \cF_{(2\Lambda_1,-1)},\cF_{(2\Lambda_3,-1)},   \cF_{(2\Lambda_4, -1)}\}\]
 generate a group isomorphic to either $\Z{2}\times \Z{2}$ or $\Z{4} \rtimes \Z{2}$, depending on if either $\cF_{(2\Lambda_1,-1)} \circ\cF_{(2\Lambda_3,-1)} \cong \cF_{(2\Lambda_4, -1)}$, or if $\cF_{(2\Lambda_1,-1)} \circ\cF_{(2\Lambda_3,-1)}$ is isomorphic to $\cF_{(2\Lambda_4, -1)}$ with twisted tensor structure maps. However, an application of Proposition~\ref{prop:com} shows that 
 \[ \cF_{(2\Lambda_1,-1)} \circ\cF_{(2\Lambda_3,-1)} \cong \cF_{(2\Lambda_3,-1)} \circ\cF_{(2\Lambda_1,-1)}.\]
 Thus the auto-equivalences 
 \[\{ \cF_{(2\Lambda_1,-1)},\cF_{(2\Lambda_3,-1)},   \cF_{(2\Lambda_4, -1)}\}\]
 generate a $\Z{2}\times \Z{2}$ group.

\bibliography{bibliography} 

\newcommand{\noopsort}[1]{}\def\cprime{$'$} \def\cprime{$'$} \def\cprime{$'$}
\begin{thebibliography}{1}

\bibitem{MR892062}
Denis Bernard.
\newblock String characters from {K}ac-{M}oody automorphisms.
\newblock {\em Nuclear Phys. B}, 288(3-4):628--648, 1987.

\bibitem{MR3555361}
Shawn~X. Cui, C\'esar Galindo, Julia~Yael Plavnik, and Zhenghan Wang.
\newblock On gauging symmetry of modular categories.
\newblock {\em Comm. Math. Phys.}, 348(3):1043--1064, 2016.

\bibitem{MR2609644}
Vladimir Drinfeld, Shlomo Gelaki, Dmitri Nikshych, and Victor Ostrik.
\newblock On braided fusion categories. {I}.
\newblock {\em Selecta Math. (N.S.)}, 16(1):1--119, 2010.
\newblock \arxiv{0906.0620} \mathscinet{MR2609644}
  \doi{10.1007/s00029-010-0017-z}.

\bibitem{1711.00645}
Cain Edie-Michell.
\newblock Equivalences of graded categories, 2017.
\newblock \arxiv{1711.00645}.

\bibitem{MR1887583}
Terry Gannon.
\newblock The automorphisms of affine fusion rings.
\newblock {\em Adv. Math.}, 165(2):165--193, 2002.
\newblock \arxiv{0002044} \mathscinet{1887583}.

\bibitem{MR1237835}
David Kazhdan and Hans Wenzl.
\newblock Reconstructing monoidal categories.
\newblock In {\em I. {M}. {G}el\cprime fand {S}eminar}, volume~16 of {\em Adv.
  Soviet Math.}, pages 111--136. Amer. Math. Soc., Providence, RI, 1993.
\newblock \mathscinet{MR1237835}.

\bibitem{MR1734419}
Frank Quinn.
\newblock Group categories and their field theories.
\newblock In {\em Proceedings of the {K}irbyfest ({B}erkeley, {CA}, 1998)},
  volume~2 of {\em Geom. Topol. Monogr.}, pages 407--453. Geom. Topol. Publ.,
  Coventry, 1999.
\newblock \arxiv{9811047} \mathscinet{MR1734419} \doi{10.2140/gtm.1999.2.407}.

\bibitem{MR1065268}
A.~N. Schellekens.
\newblock Fusion rule automorphisms from integer spin simple currents.
\newblock {\em Phys. Lett. B}, 244(2):255--260, 1990.

\bibitem{quan-primer}
Andrew Schopieray.
\newblock Lie theory for fusion categories: a research primer, 2018.
\newblock \arxiv{1810.09055}.

\end{thebibliography}
\bibliographystyle{plain}
\end{document}